\documentclass[10pt]{article}
\usepackage{mathrsfs}
\usepackage{amsfonts}
\usepackage{amsthm,amsmath,amssymb,anysize}
\usepackage[all]{xy}
\newtheorem{lemma}{Lemma}[section]
\newtheorem{theorem}[lemma]{Theorem}

\newtheorem{proposition}[lemma]{Proposition}
\newtheorem{definition}[lemma]{Definition}

\setlength{\parindent}{1em} \setlength{\baselineskip}{20pt}
\usepackage[numbers,sort&compress]{natbib}
\pagestyle{myheadings}
\marginsize{3.5cm}{3.5cm}{3.6cm}{3cm}%×óÓÒÉÏÏÂ
\numberwithin{equation}{section}

\markright{Multipliers, Covers and Stem Extensions for  Lie   Superalgebras}
\title{\textsf{Multipliers, Covers and Stem Extensions for  Lie   Superalgebras}}
\author{\textsc{Xingxue Miao and
  \textsc{Wende Liu}}\footnote{Correspondence:  wendeliu@ustc.edu.cn (W. Liu), supported by the NSF
  of China (11171055, 11471090)}\;\; \;
  \\
  \\
  \ \ \textit{School of Mathematical Sciences},
  \textit{Harbin Normal University} \\
  \textit{Harbin 150025, China}
  }
\date{ }

\begin{document}
\maketitle
\begin{quotation}
\small\noindent \textbf{Abstract}:
Suppose that the underlying field  is of characteristic different from $2, 3$. In this paper we first prove that   the so-called stem deformations of a free presentations of a finite-dimensional Lie superalgebra $L$  exhaust  all the maximal stem extensions of $L$, up to equivalence of extensions. Then we prove that  multipliers and   covers  always exist for a Lie superalgebra  and they are unique up to Lie superalgebra isomorphisms. Finally,  we describe the multipliers, covers and  maximal stem extensions of Heisenberg superalgebras of odd centers and model filiform Lie superalgebras.

\vspace{0.2cm} \noindent{\textbf{Keywords}}: multiplier, cover, stem extension, Heisenberg superalgebra, filiform Lie supleralgebra

\vspace{0.2cm} \noindent{\textbf{Mathematics Subject Classification 2010}}: 17B05, 17B30, 17B56
\end{quotation}

\setcounter{section}{0}
\section{Introduction}
The notion of multipliers and covers, which is closed relative to stem extensions,  first appeared Schur's work in the group theory and then was generalized to Lie algebra case. It proves that the theory of multipliers, covers and stem extension is not only of intrinsic interest, but also of important role in characterizing  algebraic structures, such as in computing second cohomology with coefficients in trivial modules for  groups or Lie algebras.

The study on multipliers  of Lie algebras began in 1990's (see \cite{Batten-Stitzinger,K.Moneyhun}, for example) and  the theory  has seen a fruitful  development (see \cite{B-M-S,E-S-D,Hardy,Hardy-stitzinger,Nir,Niroomand3,SAN}, for example). Among them,  a typical fact analogous to the one in the group theory  is that the multiplier of a finite-dimensional Lie algebra $L$ is isomorphic to the second cohomology group of $L$ with coefficients in the 1-dimensional trivial module (see \cite{Batten}, for example).

The notions of multipliers, covers and stem extensions may also be naturally generalized to Lie superalgebra case.
In this paper,     we first introduce the notions of stem denominators and stem deformations for an extension of a Lie superalgebra and show that all  the stem deformations of a free presentations of a finite-dimensional Lie superalgebra $L$ coincide all the maximal stem extensions of $L$, up to equivalence of extensions. Then we show that  multipliers and   covers  always exist for a Lie superalgebra  and they are unique up to Lie superalgebra isomorphisms. Finally, we describe multipliers, covers and   maximal stem extensions of Heisenberg superalgebras of odd centers and model filiform Lie superalgebras.

\section{Stem extensions}

 Unless otherwise stated, we assume that  the underlying field $\mathbb{F}$ is of characteristic different from $2, 3$, and all (super)spaces, (super)algebras are defined over $\mathbb{F}$.
 Let
  $\mathbb{Z}_{2}:=\{\bar{0}, \bar{1}\}$ be the additive group of order $2$ and $V=V_{\bar{0}}\oplus
V_{\bar{1}}$ a superspace, that is, a $\mathbb{Z}_2$-graded vector space.  For a  homogeneous element $x$ in $V$, write $|x|$ for the
parity of $x$. The symbol $|x|$  implies that $x$ has been assumed to be  a  homogeneous element.

Note that in a superspace, a subsuperspace always has a supplementary subsuperspace. Moreover, if $V$ is a superspace and $W$ is a subsuperspace of $V$, then the quotient space $V/W$ inherits a super structure and every subsuperspace of $V/W$ is of form $X/W$, where $X$ is a subsuperspace containing $W.$

Let us recall the notion of extensions Lie superalgebras and some basic properties. By definition a Lie superalgebra homomorphism is both an even linear map and an algebra homomorphism  and an ideal of a Lie superalgebra is always  a $\mathbb{Z}_{2}$-graded ideal. An extension of a Lie superalgebra $L$ by  $A$ is an exact sequence of Lie superalgebra homomorphisms:
\begin{equation}\label{eqstemext}
0\longrightarrow A\stackrel{\alpha}{\longrightarrow} B \stackrel{\beta}{\longrightarrow} L \longrightarrow 0.
\end{equation}
We usually identify $A$ with a subalgebra of $B$ and omit the embedding map $\alpha$.
Suppose that
\begin{equation}\label{eqstemextanthor}
0\longrightarrow C{\longrightarrow} D \stackrel{\gamma}{\longrightarrow} L \longrightarrow 0
\end{equation}
is also an extension of $L$ and there is a homomorphism $f$ from extensions (\ref{eqstemext}) to (\ref{eqstemextanthor}), that is,  a Lie superalgebra homomorphism $f: B\longrightarrow D$ such that $\gamma \circ f=\beta$. Then $f$ maps $A$ into $C$ and $f^{-1}(C)=A$. Hereafter we denote by $f$ itself the restriction $f: A\longrightarrow C$. Then the following diagram commutes:
$$
\xymatrix{0\ar[r]&A\ar[d]_{f}\ar[r]^{}&B\ar[d]_{f}\ar[r]^{\beta}&L\ar[d]_{\mathrm{id}}\ar[r]&0\\
0\ar[r]&C\ar[r]^{}&D\ar[r]^{\gamma}&L\ar[r]&0}
$$
 Clearly, if $f: B\longrightarrow D$ is surjective, then so is its restriction $\underline{f}: A\longrightarrow C$.

Recall that an extension (\ref{eqstemext}) is said to be central if the kernel $A$ is contained in the center  $\mathrm{Z}(B)$ of $B$. If $X$ is an ideal of $A$ as well as $B$, then (\ref{eqstemext}) induces an exact sequence
\begin{equation*}\label{eqstemextxyz}
0\longrightarrow A/X{\longrightarrow} B/X \stackrel{\beta}{\longrightarrow} L \longrightarrow 0.
\end{equation*}
Hereafter, we denote by $\beta$ itself the map induced $\overline{\beta}: B/X\longrightarrow L$.

As in the Lie algebra case, we introduce the following notion.
\begin{definition}\label{defstem} A stem extension of a Lie superalgebra $L$ is a central extension
\begin{equation*}
0\longrightarrow S{\longrightarrow} T \stackrel{\beta}{\longrightarrow} L \longrightarrow 0
\end{equation*}
such that
$S\subset  [T,T].$
\end{definition}

The following properties hold for stem extensions of Lie superalgebras, which are analogous to the ones in Lie algebra case (see \cite{Batten-Stitzinger}, for example). Hereafter, we use a partial order in $\mathbb{Z} \times \mathbb{Z}$ as follows:
\begin{equation}\label{po123}
(m, n)\leq(k, l) \Longleftrightarrow m\leq k, n\leq l.
\end{equation}
For $m,n\in \mathbb{Z}$, we write $|(m,n)|=m+n.$ We also view $\mathbb{Z} \times \mathbb{Z}$ as the additive group in the usual way.

\begin{lemma}\label{lem6688} Let $L$ be a finite-dimensional Lie superalgebra. Suppose that
$$0\longrightarrow S\longrightarrow T\stackrel{\gamma}{\longrightarrow} L\longrightarrow 0$$
is a stem extension of $L$. Then the following statements hold.
\begin{itemize}
\item[(1)] Suppose that  $\mathrm{sdim}L=(s,t)$. Then both $S$ and $T$ are finite-dimensional and
$$\mathrm{sdim}S\leq \left(\frac{1}{2}s(s-1)+\frac{1}{2}t(t+1)+s,st\right).$$
\item[(2)] $S$ is contained in every maximal subalgebra of $T$.
\item [(3)] Suppose that  $
0\longrightarrow A{\longrightarrow} B \stackrel{\beta}{\longrightarrow} L \longrightarrow 0
$
is an extension of $L$ and  $f$ is a homomorphism of extensions:
$$
\xymatrix{0\ar[r]&A\ar[d]_{f}\ar[r]^{}&B\ar[d]_{f}\ar[r]^{\beta}&L\ar[d]_{\mathrm{id}}\ar[r]&0\\
0\ar[r]&S\ar[r]^{}&T\ar[r]^{\gamma}&L\ar[r]&0}
$$
Then
$f$ must be surjective.
\end{itemize}
\end{lemma}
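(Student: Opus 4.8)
The plan is to treat the three parts in order, since (2) and (3) will reuse the structure of (1) or each other. For part (1), the key point is that a stem extension is \emph{central}, so $S \subseteq \mathrm{Z}(T)$, and $S \subseteq [T,T]$; I would first observe that $T/S \cong L$ is finite-dimensional, and then argue that $[T,T]$ is finite-dimensional. Since $S$ is central, the bracket on $T$ factors through $T/S \times T/S \to [T,T]$, i.e.\ $[T,T]$ is a homomorphic image of the (super)exterior-type square built from $T/S \cong L$; concretely, the bilinear (super)alternating map $L \times L \to [T,T]$ obtained by lifting is surjective, so $\dim [T,T] \le \dim (\text{that alternating square of } L)$. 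Counting: if $\mathrm{sdim}\,L = (s,t)$, the even part of the relevant alternating square has dimension $\binom{s}{2} + \binom{t+1}{2}$ coming from $\Lambda^2 L_{\bar 0} \oplus S^2 L_{\bar 1}$, and the odd part has dimension $st$ from $L_{\bar 0}\otimes L_{\bar 1}$. Hence $\mathrm{sdim}[T,T] \le \big(\tfrac12 s(s-1) + \tfrac12 t(t+1),\, st\big)$. Since $S \subseteq [T,T]$ and $\mathrm{sdim}\,S = \mathrm{sdim}\,T - \mathrm{sdim}\,L \le \mathrm{sdim}\,[T,T] + \mathrm{sdim}\,L - \mathrm{sdim}\,L$... more carefully: $S \subseteq [T,T]$ gives $\mathrm{sdim}\,S \le \mathrm{sdim}[T,T]$, but we also need the extra $+s$ in the even component of the claimed bound, which should come from bookkeeping the fact that $T = S + (\text{a lift of }L)$ may force part of $S$ outside a naive count — so I would instead bound $\mathrm{sdim}\,T = \mathrm{sdim}\,S + \mathrm{sdim}\,L$ and combine with $\mathrm{sdim}\,S \le \mathrm{sdim}[T,T]$, being careful that $[T,T]$ may properly contain $S$; the $+s$ term is the main numerical subtlety to get right, and I expect it arises from comparing $\dim T_{\bar 0}$ against a spanning set of the form $\{$lifts of a basis of $L_{\bar 0}\} \cup S_{\bar 0}$ together with the image of the alternating square.

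For part (2), let $M$ be a maximal subalgebra of $T$. Since $S$ is central, $M + S$ is a subalgebra of $T$; by maximality either $M + S = M$, i.e.\ $S \subseteq M$ and we are done, or $M + S = T$. In the latter case I claim $T = M$, contradicting maximality (a maximal subalgebra is proper). Indeed $T = M + S$ forces $[T,T] = [M+S, M+S] = [M,M] \subseteq M$ because $S$ is central; but $S \subseteq [T,T] \subseteq M$, so $S \subseteq M$, whence $T = M + S = M$. This is the cleanest of the three.

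For part (3): we have a homomorphism of extensions $f \colon B \to T$ over $\mathrm{id}_L$. By the generalities recalled just before Definition~\ref{defstem}, $f(A) \subseteq S$ and $\gamma \circ f = \beta$. Set $T' = f(B)$, a subalgebra of $T$. Since $\gamma(f(B)) = \beta(B) = L$, we get $T' + S = T$ (every $t \in T$ has $\gamma(t) = \gamma(f(b))$ for some $b$, so $t - f(b) \in \ker\gamma = S$). Now run exactly the argument of part (2) with $T'$ in place of $M$: from $T = T' + S$ and $S$ central, $[T,T] = [T',T'] \subseteq T'$, and $S \subseteq [T,T] \subseteq T'$, so $T = T' + S = T' = f(B)$; thus $f$ is surjective. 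The restriction $\underline f \colon A \to S$ is then surjective as well by the remark preceding the lemma.

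The main obstacle I anticipate is purely the arithmetic in part (1): pinning down why the even component of the bound is $\tfrac12 s(s-1) + \tfrac12 t(t+1) + s$ rather than just $\tfrac12 s(s-1) + \tfrac12 t(t+1)$, i.e.\ correctly accounting for the contribution of the section of $L$ to $T_{\bar 0}$ on top of the derived subalgebra count. Parts (2) and (3) are short once one exploits centrality of $S$ to collapse brackets.
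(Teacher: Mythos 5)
Your parts (2) and (3) are correct. Part (2) is essentially the paper's own argument. Part (3) takes a genuinely different route: the paper assumes $f(B)\neq T$, uses the finite-dimensionality of $T$ (from part (1)) to place $f(B)$ inside a maximal subalgebra, and then invokes part (2); you instead observe directly that $T=f(B)+S$ (since $\gamma\circ f=\beta$ is onto $L$ and $\ker\gamma=S$), that centrality of $S$ collapses $[T,T]=[f(B),f(B)]\subset f(B)$, and that $S\subset[T,T]\subset f(B)$ then forces $T=f(B)+S=f(B)$. Your version is the same bracket-collapsing trick as (2) applied to the subalgebra $f(B)$ rather than to a maximal subalgebra, and it buys something: it needs neither part (1) nor part (2) nor any finiteness hypothesis, whereas the paper's proof of (3) does.

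For part (1) the paper gives no argument at all (it cites Lemma 2.3 of the Liu--Zhang preprint), so a direct proof is welcome; yours is on the right track but you stop short over a non-issue. What you call the main numerical subtlety, the extra $+s$, requires no bookkeeping: choose an even linear section $\sigma\colon L\to T$ of $\gamma$ and set $b(x,y)=[\sigma(x),\sigma(y)]$; since $S\subset\mathrm{Z}(T)$, every bracket $[u,v]$ in $T$ equals $b(\gamma(u),\gamma(v))$, so $[T,T]$ is spanned by the image of the super-skew bilinear map $b$ and hence $\mathrm{sdim}[T,T]\leq\left(\frac{1}{2}s(s-1)+\frac{1}{2}t(t+1),\,st\right)$. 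Then $S\subset[T,T]$ gives $\mathrm{sdim}S\leq\left(\frac{1}{2}s(s-1)+\frac{1}{2}t(t+1),\,st\right)$, which is componentwise at most the bound stated in the lemma; since the partial order (\ref{po123}) is exactly componentwise comparison, the claimed inequality follows (the lemma's bound is simply not sharp, and the $+s$ is slack). Finiteness of $T$ then follows from $\mathrm{sdim}T=\mathrm{sdim}S+\mathrm{sdim}L$. So delete the hedging about accounting for the section's contribution to $T_{\bar{0}}$: as written, your part (1) is left unfinished, but the fix is only the remark above, after which your argument proves a slightly stronger statement than the one in the paper.
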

\begin{proof}

(1) See \cite[Lemma 2.3]{zhang-liu}.

(2)   Let $X$  be a maximal subalgebra of $T$. Assume conversely that $S\not\subset X$. Since $S\subset\mathrm{Z}(T)$, one sees that $S+X=T$  and then $S\subset [T,T]\subset X$, a contradiction.

(3)  Assume conversely  that $f(B)\not=T$. By (1), $T$ is finite-dimensional. Then there is a maximal subalgebra $X$ such that $f(B)\subset X$. Note that the commutativity of the diagram implies  $T=S+f(B)$. Then by (2), we have $T\subset X$, a contradiction.
\end{proof}

We   note that an arbitrary extension
\begin{equation}\label{eqstemext342sf}
0\longrightarrow A{\longrightarrow} B \stackrel{\beta}{\longrightarrow} L \longrightarrow 0
\end{equation}
 induces naturally a central extension and then a stem extension of $L$. First, by factoring out the ideal $[A, B]$, we obtain a central extension
\begin{equation}\label{eqanyextcentral}
0\longrightarrow A/[A, B]\longrightarrow B/[A, B]  \stackrel{\beta}{\longrightarrow} L \longrightarrow 0
\end{equation}
 Then, consider any supplementary subsuperspace of $A\cap [B, B]/[A, B]$ in $A/[A, B]$, which must be of form $X/[A, B]$, where $X$ is a subsuperspace of $A$ containing $[A, B]$. Clearly, $X$ is an ideal of $A$ as well as $B$.
Then we have a decomposition of ideals:
\begin{equation}\label{eq-sum-suppl}
A/[A, B]=A\cap [B, B]/[A, B]\oplus X/[A, B].
\end{equation}
Such a subsuperspace (ideal) $X$ is called a \textit{stem denominator} of the extension (\ref{eqstemext342sf}). Since $X\subset A$ and $\beta(A)=0$,  we obtain an extension
 \begin{equation}\label{eqanyextcentralstemdeformation}
0\longrightarrow A/X\longrightarrow B/X  \stackrel{\beta}{\longrightarrow} L \longrightarrow 0,
\end{equation}
 called a \textit{stem deformation} of  extension (\ref{eqstemext342sf}). By the following proposition, a stem deformation of an extension is a stem extension.

 \begin{proposition}\label{pro-stem-defomation}
 Let
 \begin{equation}\label{eqanyextnew}
0\longrightarrow A\longrightarrow B  \stackrel{\beta}{\longrightarrow} L \longrightarrow 0
\end{equation}
be an extension of Lie superalgebra  $L$. Suppose that  $X$ is a subsuperspace of $A$ and $X\supset [A, B]$.
\begin{itemize}
\item[(1)] $X$ is a stem denominator of  extension (\ref{eqanyextnew}) if and only if
\begin{equation}\label{equiv-denomiator}
\mbox{$A=A\cap [B, B]+ X$  and  $ [B, B]\cap X=[A, B]$.}
\end{equation}

\item[(2)] Suppose that  $X$ is a stem denominator of  extension (\ref{eqanyextnew}). Then
 \begin{equation}\label{eqanyextcentralstemdeformation11}
0\longrightarrow A/X\longrightarrow B/X  \stackrel{\beta}{\longrightarrow} L \longrightarrow 0
\end{equation}
is a stem extension and moreover,
\begin{equation}\label{eq-sum-suppls-iso}
A/X\cong A\cap [B, B]/[A, B].
\end{equation}
\item[(3)] For an extension, a stem denominators and then a deformations always exist.
\end{itemize}
 \end{proposition}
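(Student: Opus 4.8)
The plan is to treat the three parts in turn, reducing each to the elementary facts about subsuperspaces and quotient superspaces recorded at the start of Section~2. For (1), I would simply unwind the definition: $X$ is a stem denominator exactly when $X/[A,B]$ is a supplementary subsuperspace of $(A\cap[B,B])/[A,B]$ in $A/[A,B]$, i.e. $A/[A,B]=(A\cap[B,B])/[A,B]\oplus X/[A,B]$. Since all three of $[A,B]$, $A\cap[B,B]$, $X$ contain $[A,B]$, the spanning part of this direct-sum decomposition is equivalent to $A=A\cap[B,B]+X$, while triviality of the intersection is equivalent to $(A\cap[B,B])\cap X=[A,B]$; and because $X\subseteq A$, the latter intersection equals $[B,B]\cap X$, so the condition reads $[B,B]\cap X=[A,B]$ (the inclusion $[A,B]\subseteq[B,B]\cap X$ being automatic from $A\subseteq B$ and $[A,B]\subseteq X$). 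This is precisely (\ref{equiv-denomiator}), and the argument is symmetric in both directions.

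For (2), I would first observe that $X$ is an ideal of $B$, since $[X,B]\subseteq[A,B]\subseteq X$; together with $X\subseteq A=\ker\beta$ this makes (\ref{eqanyextcentralstemdeformation11}) an exact sequence of Lie superalgebras, exactly as noted before Definition~\ref{defstem}. It is central because $[A/X,B/X]=([A,B]+X)/X=0$, using $[A,B]\subseteq X$. It is a stem extension because, by the first condition in (\ref{equiv-denomiator}) (part (1)), $A=A\cap[B,B]+X\subseteq[B,B]+X$, whence $A/X\subseteq([B,B]+X)/X=[B/X,B/X]$. Finally, (\ref{eq-sum-suppls-iso}) follows from the isomorphism theorem in the category of superspaces: the inclusion $A\cap[B,B]\hookrightarrow A$ induces $(A\cap[B,B])/\big((A\cap[B,B])\cap X\big)\cong(A\cap[B,B]+X)/X=A/X$, and the denominator on the left is $[B,B]\cap X=[A,B]$ by the second condition in (\ref{equiv-denomiator}); this map is visibly even and bracket-compatible, so it is an isomorphism of Lie superalgebras.

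For (3), I would invoke two facts quoted in Section~2: in a superspace every subsuperspace admits a supplementary subsuperspace, and every subsuperspace of the quotient $A/[A,B]$ has the form $X/[A,B]$ for a subsuperspace $X$ of $A$ with $X\supseteq[A,B]$. Here $[A,B]$ is indeed a subsuperspace of $A$ (it is $\mathbb{Z}_2$-graded, and lies in $A$ since $A$ is an ideal). Choosing a supplementary subsuperspace of $(A\cap[B,B])/[A,B]$ in $A/[A,B]$ and writing it as $X/[A,B]$ produces a stem denominator by definition, and then the associated stem deformation exists by part (2).

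The whole argument is essentially bookkeeping; the only place where part (1) is genuinely needed is the reduction, inside part (2), of the stem condition $A/X\subseteq[B/X,B/X]$ to the spanning condition $A=A\cap[B,B]+X$. The one point to watch throughout is the $\mathbb{Z}_2$-grading — that $[A,B]$, $A\cap[B,B]$, $X$ and $[B,B]+X$ are all subsuperspaces of the relevant superalgebras, and that the correspondence theorem and the isomorphism theorem remain valid in the super setting — but all of this is supplied by the preliminary remarks of Section~2.
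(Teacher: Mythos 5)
Your proof is correct and follows essentially the same route as the paper: part (1) is the equivalence of (\ref{equiv-denomiator}) with the defining decomposition (\ref{eq-sum-suppl}), part (2) deduces centrality from $[A,B]\subseteq X$, the stem condition from $A=A\cap[B,B]+X$, and the isomorphism (\ref{eq-sum-suppls-iso}) from $[B,B]\cap X=[A,B]$, and part (3) repeats the existence argument given just before the proposition. You merely spell out details (the ideal property of $X$, the use of the second isomorphism theorem in the super setting) that the paper leaves implicit, so there is nothing to correct.
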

\begin{proof} (1) It follows from the fact that (\ref{equiv-denomiator}) is equivalent to (\ref{eq-sum-suppl}).

(2) Since $[A,B]\subset X$, we have $[A/X, B/X]=0$. By (\ref{equiv-denomiator}), it is clear that $A/X\subset [B/X, B/X]$.  Hence (\ref{eqanyextcentralstemdeformation11}) is a stem extension. While
(\ref{eq-sum-suppls-iso}) is a direct consequence of (\ref{equiv-denomiator}).

(3) Form the argument before this proposition, one sees a stem denominator always exists and so does a stem deformation by (2).
\end{proof}

The following theorem tells us that if a stem extension $(S)$ of a Lie superalgebra $L$ is a homomorphic image of an extension $(E)$ of $L$, then $(S)$  must be a homomorphic image of some stem deformation of $(E)$.

\begin{theorem}\label{them-hom-of-stemk}  Let
\begin{equation}\label{usual-ext052}
0\longrightarrow A{\longrightarrow} B \stackrel{\beta}{\longrightarrow} L \longrightarrow 0
\end{equation}
be an extension  of $L$ and
\begin{equation} \label{eq-stemnnn}
0\longrightarrow S\longrightarrow T\stackrel{\gamma}{\longrightarrow} L\longrightarrow 0
\end{equation}
 a stem extension of $L$.  Suppose that  $f: B\longrightarrow T$ is a homomorphism of extensions:
\begin{equation}\label{eqtbjiahx213}
\xymatrix{0\ar[r]&A\ar[d]_{f}\ar[r]^{}&B\ar[d]_{f}\ar[r]^{\beta}&L\ar[d]_{\mathrm{id}}\ar[r]&0\\
0\ar[r]&S\ar[r]^{}&T\ar[r]^{\gamma}&L\ar[r]&0}
\end{equation}
Then there is a stem denominator $X$ of  extension (\ref{usual-ext052}) such that $f(X)=0$ and $f$ induces an epimorphism of extensions:
$$
\xymatrix{0\ar[r]&A/X\ar[d]_{f}\ar[r]^{}&B/X\ar[d]_{f}\ar[r]^{\beta}&L\ar[d]_{\mathrm{id}}\ar[r]&0\\
0\ar[r]&S\ar[r]^{}&T\ar[r]^{\gamma}&L\ar[r]&0}
$$
\end{theorem}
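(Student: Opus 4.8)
The plan is to obtain $X$ by carving a suitable subsuperspace out of $\ker f$. Since $(\ref{eq-stemnnn})$ is a stem extension and $(\ref{eqtbjiahx213})$ is a homomorphism of extensions into it, Lemma~\ref{lem6688}(3) shows that $f\colon B\to T$, hence also its restriction $A\to S$, is surjective. Write $K=\ker f$; then $K\subseteq\ker\beta=A$. Because $f(A)=S\subseteq\mathrm{Z}(T)$ and $f(B)=T$, we get $f([A,B])=[S,T]=0$, so $[A,B]\subseteq K$. Moreover $S\subseteq[T,T]$ together with the isomorphisms $B/K\cong T$ and $A/K\cong S$ forces $A/K\subseteq([B,B]+K)/K$, i.e. $A\subseteq[B,B]+K$; since $K\subseteq A$, the modular law gives
\[
A=(A\cap[B,B])+K.
\]

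The delicate point is that $K$ need not itself be a stem denominator of $(\ref{usual-ext052})$: it contains $[A,B]$ and meets the first half of $(\ref{equiv-denomiator})$, but the equality $[B,B]\cap K=[A,B]$ may fail. To repair this I would factor out $[A,B]$ and work inside $\bar A:=A/[A,B]$ with the subsuperspaces $\bar K:=K/[A,B]$ and $\overline{A\cap[B,B]}:=(A\cap[B,B])/[A,B]$, for which the displayed equality reads $\bar A=\overline{A\cap[B,B]}+\bar K$. Using that a subsuperspace of a superspace always has a supplementary subsuperspace, choose $\bar X\subseteq\bar K$ with
\[
\bar K=\bar X\oplus\bigl(\bar K\cap\overline{A\cap[B,B]}\bigr),
\]
and let $X$ be the subsuperspace of $A$ with $X/[A,B]=\bar X$; then $[A,B]\subseteq X\subseteq K$.

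Finally I would check the two requirements. On the one hand $\bar X+\overline{A\cap[B,B]}\supseteq\bar K+\overline{A\cap[B,B]}=\bar A$, while $\bar X\cap\overline{A\cap[B,B]}=\bar X\cap\bigl(\bar K\cap\overline{A\cap[B,B]}\bigr)=0$ by the choice of $\bar X$; hence $\bar A=\overline{A\cap[B,B]}\oplus\bar X$, which by Proposition~\ref{pro-stem-defomation}(1) (equivalently, by $(\ref{eq-sum-suppl})$) says exactly that $X$ is a stem denominator of $(\ref{usual-ext052})$. On the other hand $X\subseteq K=\ker f$ gives $f(X)=0$, so $f$ descends to a map $B/X\to T$; this induced map commutes with the projections to $L$, carries $A/X$ onto $S=f(A)$, and is surjective because $f$ is, so it is an epimorphism of extensions, which is what was claimed. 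The main obstacle is precisely arranging that one and the same $X$ lies in $\ker f$ and satisfies $[B,B]\cap X=[A,B]$; choosing $\bar X$ as a complement of $\bar K\cap\overline{A\cap[B,B]}$ inside $\bar K$ — rather than taking $K$ outright — is what makes both hold.
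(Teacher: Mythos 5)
Your argument is correct and follows essentially the same route as the paper: surjectivity of $f$ via Lemma~\ref{lem6688}(3), the identities $[A,B]\subseteq\ker f$ and $A=(A\cap[B,B])+\ker f$, and then a choice of $X\subseteq\ker f$ realizing the decomposition of Proposition~\ref{pro-stem-defomation}(1). The only difference is that you spell out explicitly, by taking a graded complement of $\bar K\cap\overline{A\cap[B,B]}$ inside $\bar K$, the existence of such an $X$, which the paper merely asserts from $A=A\cap[B,B]+\ker f$.
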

\begin{proof}
By Lemma \ref{lem6688}(3), $f$ is surjective. Then $f(A)=S$ and $f^{-1}(S)=A$. Since $S\subset [T, T]$, we have $S\subset f([B, B])$ and $S\subset f(A\cap [B, B])$. Hence $S=f(A\cap [B, B])$ and then $f(A)=f(A\cap [B, B])$.  Consequently,
\begin{equation}\label{eqkerbu}
A=A\cap [B, B]+\ker f.
\end{equation}
Clearly, $[A, B]\subset\ker f,$ since $S\subset \mathrm{Z}(T)$. By (\ref{eqkerbu}), there is a subsuperspace $X\subset \ker f$ satisfying that
$[A, B]\subset X\subset A$ such that $A=A\cap [B,B]+X$ and
 $[B,B]\cap X=[A,B]$. By Proposition \ref{pro-stem-defomation}, $X$ is the desired stem denominator.
The proof is complete.
\end{proof}

We shall prove an important fact, which states  that  a stem extension of $L$ must be  a homomorphic image  of a stem deformation of any free presentation of $L.$
Recall that  a Lie superalgebra $L$ always has  a free presentation, that is, an extension
\begin{equation*}
0\longrightarrow R\longrightarrow F\stackrel{\pi}{\longrightarrow} L\longrightarrow 0
\end{equation*}
with $F$ being a free Lie superalgebra. Let
\begin{equation} \label{eq-stemnnn104}
0\longrightarrow S\longrightarrow T\stackrel{\gamma}{\longrightarrow} L\longrightarrow 0
\end{equation}
 a stem extension of $L$.  Then there is  a  homomorphism of extensions,
\begin{equation}\label{eqtbjiahx}
\xymatrix{0\ar[r]&R\ar[d]_{f}\ar[r]^{}&F\ar[d]_{f}\ar[r]^{\pi}&L\ar[d]_{\mathrm{id}}\ar[r]&0\\
0\ar[r]&S\ar[r]^{}&T\ar[r]^{\gamma}&L\ar[r]&0}
\end{equation}
So, as a direct consequence of Theorem \ref{them-hom-of-stemk}, we have
\begin{theorem}\label{thm-free-stem-hom} Suppose that
 \begin{equation}\label{eq-free-pre1}
0\longrightarrow R\longrightarrow F\stackrel{\pi}{\longrightarrow} L\longrightarrow 0
\end{equation}
is a free presentation of  Lie superalgebra $L$ and
\begin{equation}\label{eq-stem-ext-any}
0\longrightarrow S\longrightarrow T\stackrel{\gamma}{\longrightarrow} L\longrightarrow 0
\end{equation}
is   a stem extension of $L$. Then  there is a stem denominator $X$ of (\ref{eq-free-pre1}) such that $f(X)=0$  and $f$ induces an epimorphism of extensions:
$$
\xymatrix{0\ar[r]&R/X\ar[d]_{f}\ar[r]^{}&F/X\ar[d]_{f}\ar[r]^{\pi}&L\ar[d]_{\mathrm{id}}\ar[r]&0\\
0\ar[r]&S\ar[r]^{}&T\ar[r]^{\gamma}&L\ar[r]&0}
$$\qed
\end{theorem}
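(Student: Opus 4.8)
The plan is to reduce the statement to Theorem \ref{them-hom-of-stemk}: the only thing to verify is that a homomorphism of extensions $f\colon F\to T$ as in diagram \eqref{eqtbjiahx} exists, after which that theorem supplies everything.

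To produce $f$, I would use the universal property of the free Lie superalgebra $F$. Write $F$ as the free Lie superalgebra on a $\mathbb{Z}_2$-graded set $Y=Y_{\bar 0}\cup Y_{\bar 1}$. Since $\gamma\colon T\to L$ is an even epimorphism, both restrictions $\gamma\colon T_{\bar 0}\to L_{\bar 0}$ and $\gamma\colon T_{\bar 1}\to L_{\bar 1}$ are surjective, so for each $y\in Y$ one may choose a homogeneous $t_y\in T$ with $|t_y|=|y|$ and $\gamma(t_y)=\pi(y)$. By freeness, the assignment $y\mapsto t_y$ extends uniquely to an even Lie superalgebra homomorphism $f\colon F\to T$ (evenness being guaranteed because each $t_y$ has parity $|y|$), and since $\gamma\circ f$ agrees with $\pi$ on the generating set $Y$ we conclude $\gamma\circ f=\pi$. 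As observed in the discussion following \eqref{eqstemextanthor}, this forces $f(R)\subseteq S$ and $f^{-1}(S)=R$, so $f$ is a homomorphism of extensions and diagram \eqref{eqtbjiahx} commutes.

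It then remains only to apply Theorem \ref{them-hom-of-stemk}, taking \eqref{usual-ext052} to be the free presentation \eqref{eq-free-pre1} (so $B=F$ and $A=R$) and \eqref{eq-stemnnn} to be the stem extension \eqref{eq-stem-ext-any}: the theorem produces a stem denominator $X$ of \eqref{eq-free-pre1} with $f(X)=0$ together with the induced epimorphism of extensions from $0\to R/X\to F/X\to L\to 0$ onto $0\to S\to T\to L\to 0$, which is precisely the assertion. I do not expect any genuine obstacle here, since the substance lies in Theorem \ref{them-hom-of-stemk} --- and ultimately in Lemma \ref{lem6688}(3), the fact that a homomorphism into a stem extension is automatically surjective. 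The only point calling for a little care is the parity bookkeeping in the choice of the $t_y$, which goes through exactly because $\gamma$ is an even surjection.
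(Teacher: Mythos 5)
Your proposal is correct and follows exactly the paper's route: the paper likewise obtains the homomorphism of extensions \eqref{eqtbjiahx} by lifting the generators of the free Lie superalgebra $F$ along the surjection $\gamma$ and then invokes Theorem \ref{them-hom-of-stemk}. You merely spell out the freeness/parity argument that the paper leaves implicit, so there is nothing to add.
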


In view of Lemma \ref{lem6688}(1),   the following notion makes sense.

\begin{definition}
Let $L$ be a finite-dimensional Lie superalgebra. A stem extension of $L$,
\begin{equation*}
0\longrightarrow S\longrightarrow T\stackrel{\gamma}{\longrightarrow} L\longrightarrow 0,
\end{equation*}
 is called \textit{maximal}, if,  among all the stem extensions of $L$, the kernel $S$ is of maximal superdimension with respect to the partial order (\ref{po123}).
\end{definition}

\begin{theorem}\label{thm-max-stem-ext} Let $L$ be a finite-dimensional Lie superalgebra and
\begin{equation}\label{frethemeqllls}
0\longrightarrow R\longrightarrow F\stackrel{\pi}{\longrightarrow} L\longrightarrow 0.
\end{equation}
a free presentation. Then,  up to isomorphisms of extensions, the stem deformations of (\ref{frethemeqllls}) exhaust all the maximal stem extensions of $L$.
\end{theorem}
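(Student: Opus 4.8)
The plan is to prove the two halves of the word ``exhaust'' in turn: first, that every stem deformation of the fixed presentation (\ref{frethemeqllls}) is a maximal stem extension of $L$, and second, that conversely every maximal stem extension of $L$ is isomorphic, as an extension, to some stem deformation of (\ref{frethemeqllls}). The single fact that drives both halves is that, by the isomorphism (\ref{eq-sum-suppls-iso}) in Proposition \ref{pro-stem-defomation}(2), every stem deformation $0\to R/X\to F/X\stackrel{\pi}{\to} L\to 0$ attached to a stem denominator $X$ of (\ref{frethemeqllls}) has kernel $R/X\cong R\cap[F,F]/[R,F]$; hence all of them have one and the same superdimension, which is finite by Lemma \ref{lem6688}(1). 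Write $d$ for this common value.

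For the first half I would fix a stem denominator $X$ of (\ref{frethemeqllls}) and let $0\to S\to T\stackrel{\gamma}{\to} L\to 0$ be an arbitrary stem extension of $L$. Theorem \ref{thm-free-stem-hom} supplies a stem denominator $X'$ of (\ref{frethemeqllls}) and a homomorphism of extensions $F/X'\to T$ whose restriction $R/X'\to S$ is onto; since this restriction is a surjective even linear map between finite-dimensional superspaces, $\mathrm{sdim}\,S\leq\mathrm{sdim}(R/X')=d=\mathrm{sdim}(R/X)$. As $S$ was arbitrary, the kernel $R/X$ of the stem deformation attached to $X$ has superdimension at least that of the kernel of any stem extension of $L$; in particular it is maximal for the partial order (\ref{po123}), so this stem deformation is a maximal stem extension.

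For the converse, let $0\to S\to T\stackrel{\gamma}{\to} L\to 0$ be a maximal stem extension. Applying Theorem \ref{thm-free-stem-hom} to it gives a stem denominator $X$ of (\ref{frethemeqllls}) with $f(X)=0$ and an epimorphism of extensions
$$
\xymatrix{0\ar[r]&R/X\ar[d]_{f}\ar[r]^{}&F/X\ar[d]_{f}\ar[r]^{\pi}&L\ar[d]_{\mathrm{id}}\ar[r]&0\\
0\ar[r]&S\ar[r]^{}&T\ar[r]^{\gamma}&L\ar[r]&0}
$$
In particular $f\colon R/X\to S$ is onto, so $\mathrm{sdim}(R/X)\geq\mathrm{sdim}\,S$; but $R/X$ is itself the kernel of a stem extension (Proposition \ref{pro-stem-defomation}(2)) while $S$ has maximal superdimension, so strict inequality is impossible and $\mathrm{sdim}(R/X)=\mathrm{sdim}\,S$. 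Everything being finite-dimensional, $f\colon R/X\to S$ is then bijective. I would finish with a one-line diagram chase: if $z\in F/X$ lies in $\ker f$ then $\pi(z)=\gamma f(z)=0$, so $z\in R/X$ and hence $z\in\ker(f|_{R/X})=0$; thus $f\colon F/X\to T$ is injective, and it is onto because $f$ is an epimorphism of extensions. Being a bijective Lie superalgebra homomorphism compatible with the structure maps, $f\colon F/X\to T$ is an isomorphism of extensions from the stem deformation attached to $X$ onto the given maximal stem extension.

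I do not anticipate a genuine obstacle: the real content has already been packaged into Theorem \ref{thm-free-stem-hom} (every stem extension is a homomorphic image of a stem deformation of a free presentation) and into (\ref{eq-sum-suppls-iso}) (all stem deformations share a kernel up to isomorphism), and what is left is the routine passage from ``surjection between superspaces of equal finite superdimension'' to ``isomorphism''. The only points to stay careful about are that maximality refers to the componentwise partial order (\ref{po123}) rather than to total dimension, so the superdimension comparisons must be kept as inequalities of pairs; and that the invariance of $\mathrm{sdim}(R\cap[F,F]/[R,F])$ invoked above is exactly the one Proposition \ref{pro-stem-defomation}(2) yields for the fixed presentation (\ref{frethemeqllls})---independence of the chosen free presentation is not needed here.
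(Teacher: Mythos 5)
Your proposal is correct and follows essentially the same route as the paper: both directions rest on Theorem \ref{them-hom-of-stemk}/\ref{thm-free-stem-hom} together with the kernel isomorphism (\ref{eq-sum-suppls-iso}) of Proposition \ref{pro-stem-defomation}(2). The only difference is that you spell out the superdimension count showing the epimorphism from a stem deformation onto a maximal stem extension is bijective, a step the paper's proof asserts tersely ``by maximality''.
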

\begin{proof} Let
\begin{equation}\label{eq-stem-ext-any-liu}
0\longrightarrow M\longrightarrow C\stackrel{\gamma}{\longrightarrow} L\longrightarrow 0
\end{equation}
   be a maximal stem extension of $L$. By Theorem \ref{thm-free-stem-hom}, there is  an epimorphism $f$ of extensions:
$$
\xymatrix{0\ar[r]&R/X\ar[d]_{f}\ar[r]^{}&F/X\ar[d]_{f}\ar[r]^{\pi}&L\ar[d]_{\mathrm{id}}\ar[r]&0
\\
0\ar[r]&M\ar[r]^{}&C\ar[r]^{\gamma}&L\ar[r]&0}
$$
where $X$ is a stem denominator of (\ref{frethemeqllls}) such that $f(X)=0$. By the maximality  of (\ref{eq-stem-ext-any-liu}), $f$ is an isomorphism of extensions.
By Proposition \ref{pro-stem-defomation}(2), $M\cong R\cap [F,F]/[R, F]$ is independent of the choice of $X$. Hence every stem deformation of (\ref{frethemeqllls}) is a maximal stem extension of $L$.
\end{proof}

\section{Multipliers and Covers}

Analogous to the Lie algebra case, we  introduce the following definition for    Lie superalgebras.
\begin{definition}\label{def-multiplier-cover} Let $L$ be a finite-dimensional Lie superalgebra. If
\begin{equation*}
0\longrightarrow M\longrightarrow C{\longrightarrow} L\longrightarrow 0
\end{equation*}
is a maximal stem extension of Lie superalgebra $L$, then $M$ is called a multiplier and $C$ a cover of $L.$
%In this case, we also call $(C, M)$ a maximal defining pair of $L$.
\end{definition}
By Theorem \ref{thm-max-stem-ext}, for a finite-dimensional Lie superalgebra, multipliers and covers always exist.
We shall prove that both multipliers and covers are unique up to Lie superalgebra isomorphisms. We need a technical lemma, which is a super-version of a general result (see \cite{Batten-Stitzinger}). Recall that a superalgebra is a superspace with a bilinear multiplication which is compatible with the $\mathbb{Z}_{2}$-grading structure.
 \begin{lemma}\label{lem1234}
   Let $A_{1}, A_{2}, B_{1}$ and $B_{2}$ be   superalgebras and
   \begin{equation}\label{eq091}
   A_{1}\oplus B_{1}\cong A_{2}\oplus B_{2}
 \end{equation}
 as superalgebras.
   Suppose that  $A_{1}$ and $A_{2}$ are finite-dimensional and
   $A_{1}\cong A_{2}$ as superalgebras.
Then $B_{1}\cong B_{2}$ as superalgebras.
\end{lemma}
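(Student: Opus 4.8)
The plan is to transport the situation inside one of the algebras by means of the isomorphism (\ref{eq091}), to reduce the assertion to the surjectivity of a single projection, and then to obtain that surjectivity by a Fitting-type (``exchange'') argument on the finite-dimensional factor, mimicking the classical Lie algebra case \cite{Batten-Stitzinger} and checking along the way that everything stays $\mathbb{Z}_2$-graded.

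Concretely, let $\phi\colon A_1\oplus B_1\to A_2\oplus B_2$ be the isomorphism of (\ref{eq091}), let $\sigma\colon A_1\to A_2$ be the given one, and put $D:=A_2\oplus B_2$, $I:=\phi(A_1\oplus 0)$ and $J:=\phi(0\oplus B_1)$. Since the image of an ideal under an isomorphism is an ideal, $I$ and $J$ are ideals of $D$ with $D=A_2\oplus B_2=I\oplus J$ as direct sums of ideals, $I\cong A_1\cong A_2$ finite-dimensional, and $J\cong B_1$. It is therefore enough to prove $J\cong B_2$, and for this it suffices to produce an ideal decomposition $D=I\oplus B_2$, since then $B_2\cong D/I\cong J\cong B_1$. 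Let $p\colon D\to A_2$ be the projection. Its restriction $p|_I\colon I\to A_2$ is a homomorphism of superalgebras with kernel $I\cap B_2$; as $\dim I=\dim A_2<\infty$, it is an isomorphism as soon as it is surjective, and if $p|_I$ is surjective then $I+B_2$ contains $B_2$ and maps onto $D/B_2=A_2$, so $I+B_2=D$ and $D=I\oplus B_2$ as ideals. Thus the whole problem reduces to showing that, after possibly precomposing $\phi$ with a suitable automorphism of $A_1\oplus B_1$, the homomorphism $f:=p\circ\phi|_{A_1\oplus 0}\colon A_1\to A_2$ is an isomorphism.

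To achieve this I would argue as in the classical case. Consider the endomorphism $\varphi:=\sigma^{-1}\circ f$ of the finite-dimensional superalgebra $A_1$. If $\varphi$ is injective it is an automorphism, hence $f$ is an isomorphism and we are done. Otherwise Fitting's Lemma gives, for $n\gg 0$, a $\varphi$-invariant superspace decomposition $A_1=\ker\varphi^{\,n}\oplus\mathrm{im}\,\varphi^{\,n}$ on which $\varphi$ acts nilpotently, resp. bijectively, and $\ker\varphi^{\,n}$ is an ideal of $A_1$ being the kernel of an algebra homomorphism. One then splits off the summand $\mathrm{im}\,\varphi^{\,n}$, on which $f$ is already injective, together with a matching ideal of $B_2$, cancels it exactly as above, and applies induction on $\dim A_1$ to the lower-dimensional instance of the lemma carried by $\ker\varphi^{\,n}$; the base case $\dim A_1=0$ forces $A_1=A_2=0$ and hence $B_1\cong B_2$ trivially.

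I expect the last step to be the main obstacle: the Fitting decomposition lives a priori only in the category of superspaces, while $A_1=\ker\varphi^{\,n}\oplus\mathrm{im}\,\varphi^{\,n}$ need not be a decomposition into ideals, so work is needed to promote it to an honest splitting of $D$ compatible with $\phi$ — essentially an exchange-property statement for finite-dimensional (super)algebras, the required fact being the $\mathbb{Z}_2$-graded analogue of the one in \cite{Batten-Stitzinger}. The grading itself, by contrast, causes no difficulty: $\phi$ and $\sigma$ are even and every object produced above (kernels, images, projections, Fitting summands) is a subsuperspace, so the argument runs over the ungraded case with no extra bookkeeping.
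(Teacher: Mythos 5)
Your reduction steps are fine as far as they go (transporting to $D=A_2\oplus B_2$, noting $I=\phi(A_1)$, $J=\phi(B_1)$ are graded ideals, and observing that an ideal decomposition $D=I'\oplus B_2$ with $I'\cong A_2$ would finish the job), but the proof stops exactly where the real difficulty begins, and you say so yourself. The existence of an automorphism of $A_1\oplus B_1$ after which $p\circ\phi|_{A_1}$ becomes an isomorphism onto $A_2$ is not a technical adjustment: it is essentially the cancellation statement the lemma asserts, and nothing in the proposal establishes it. The Fitting decomposition $A_1=\ker\varphi^{\,n}\oplus\mathrm{im}\,\varphi^{\,n}$ does not help as written: $\varphi^{\,n}$ is an algebra endomorphism, so $\ker\varphi^{\,n}$ is an ideal and $\mathrm{im}\,\varphi^{\,n}$ a subalgebra, but the sum need not be a sum of ideals of $A_1$, the bracket between the two summands need not vanish, and $\varphi$ was built from the projection $p$, which discards the $B_2$-components of $\phi(A_1)$ — so there is no canonical ``matching ideal of $B_2$'' to split off, and the phrase ``cancels it exactly as above'' is precisely the unproved exchange property. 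Deferring it to ``the $\mathbb{Z}_2$-graded analogue of the one in \cite{Batten-Stitzinger}'' makes the argument circular, since that analogue is the lemma under discussion.

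For comparison, the paper's proof avoids any Fitting or exchange machinery. It regards $A_2$ and $B_2$ as graded ideals of $A_1\oplus B_1$, so that $A_1\oplus B_1=A_2\oplus B_2$ internally. If $A_1\cap B_2=0$ or $A_2\cap B_1=0$ the conclusion is immediate (this is the easy projection argument you also use). Otherwise both intersections are nonzero graded ideals, and quotienting by them yields the isomorphism $\frac{A_1}{A_1\cap B_2}\oplus\frac{B_1}{A_2\cap B_1}\cong\frac{A_2}{A_2\cap B_1}\oplus\frac{B_2}{A_1\cap B_2}$; a short juggling of direct summands, together with $A_1\cong A_2$ and the symmetry in the indices $1,2$, produces an isomorphism of the form $C\oplus B_1\cong C\oplus B_2$ with $C=\frac{A_1}{A_1\cap B_2}\oplus\frac{A_2}{A_2\cap B_1}$, and one concludes by induction on dimension, the nonvanishing of the two intersections being what makes the dimensions drop. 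If you want to salvage your route, you would have to prove the graded exchange/cancellation statement from scratch (e.g.\ by the kind of induction the paper runs), at which point you have reproduced the paper's argument rather than shortcut it.
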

   \begin{proof}
   By (\ref{eq091}), one can view $A_{2}$ and $B_{2}$ as $\mathbb{Z}_{2}$-graded ideals of the  superalgebra $A_{1}\oplus B_{1}$ and then we have
   \begin{equation}\label{eq093}
A_{1}\oplus B_{1}=A_{2}\oplus B_{2}.
\end{equation}
 If $A_{1}\cap B_{2}=0$ or $A_{2}\cap B_{1}=0$, one may easily see that $B_{1}\cong B_{2}$ as superalgebras, since $A_{1}$ and $A_{2}$ are finite-dimensional.

Suppose that  $A_{1}\cap B_{2}\neq 0$ and $A_{2}\cap B_{1}\neq 0$. Note that both $A_{1}\cap B_{2}$ and $A_{2}\cap B_{1}$ are $\mathbb{Z}_{2}$-graded ideals. By (\ref{eq093}), we have the following superalgebra isomorphism:
\begin{equation}\label{eq094}
\frac{A_{1}}{A_{1}\cap B_{2}}\oplus \frac{B_{1}}{A_{2}\cap B_{1}}\cong \frac{A_{2}}{A_{2}\cap B_{1}}\oplus \frac{B_{2}}{A_{1}\cap B_{2}}.
\end{equation}
Then we have the following superalgebra isomorphisms:
\begin{eqnarray*}
\frac{A_{1}}{A_{1}\cap B_{2}}\oplus \frac{A_{2}}{A_{2}\cap B_{1}}\oplus B_{1}&\cong& \frac{A_{2}}{A_{2}\cap B_{1}}\oplus \frac{A_{1}\oplus B_{1}}{A_{1}\cap B_{2}}\\&\cong& \frac{A_{2}}{A_{2}\cap B_{1}}\oplus \frac{A_{2}\oplus B_{2}}{A_{1}\cap B_{2}}\\&\cong& \frac{A_{2}}{A_{2}\cap B_{1}}\oplus \frac{B_{2}}{A_{1}\cap B_{2}}\oplus A_{2}.
 \end{eqnarray*}
By symmetry, since   $A_{1}\cong A_{2}$ as superalgebras,  it follows from  (\ref{eq094}) that the following superalgebra isomorphism holds:
$$\frac{A_{1}}{A_{1}\cap B_{2}}\oplus \frac{A_{2}}{A_{2}\cap B_{1}}\oplus B_{1}\cong \frac{A_{1}}{A_{1}\cap B_{2}}\oplus \frac{A_{2}}{A_{2}\cap B_{2}}\oplus B_{2}.$$
By induction on  dimensions, we have $B_{1}\cong B_{2}$ as superalgebras.
\end{proof}

\begin{theorem}\label{multiplier-unique-free} Let $L$ be a finite-dimensional Lie superalgebra. Up to Lie superalgebra isomorphisms, there are a unique multiplier and a unique cover of $L$, denoted by $\mathcal{M}(L)$ and $\mathcal{C}(L)$, respectively. Moreover, for any free presentation of $L$,
\begin{equation}\label{frethemeq54svls}
0\longrightarrow R\longrightarrow F{\longrightarrow} L\longrightarrow 0,
\end{equation}
\begin{itemize}
\item[(1)]
$
\mathcal{M}(L)\cong R\cap[F,F]/[R,R], $
\item[(2)]
 $\mathcal{C}(L)\cong Y/[R,F],$ where $Y$  is any subsuperspace $Y$ of $F$ containing $[F,F]$ such that $
F/[R, F]=X/[R, F]\oplus Y/[R, F]
$ as superspaces.
\end{itemize}
\end{theorem}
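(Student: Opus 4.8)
The plan is to derive the full statement from the structure theory of Section~2 together with the cancellation Lemma~\ref{lem1234}. First I would fix a free presentation $0\to R\to F\to L\to 0$; since $L$ is finite-dimensional it is finitely generated, so for the uniqueness assertions I take $F$ free on a \emph{finite} set, which makes $F/[F,F]$ finite-dimensional and hence also $R/[R,F]$, since the latter is an extension of $R/(R\cap[F,F])\cong(R+[F,F])/[F,F]\subseteq F/[F,F]$ by the finite-dimensional multiplier $R\cap[F,F]/[R,F]$. By Theorem~\ref{thm-max-stem-ext} every maximal stem extension of $L$ is isomorphic, as an extension, to a stem deformation $0\to R/X\to F/X\to L\to 0$ attached to some stem denominator $X$ of this presentation; hence every cover of $L$ is isomorphic as a Lie superalgebra to some $F/X$, and every multiplier to $R/X\cong R\cap[F,F]/[R,F]$ by Proposition~\ref{pro-stem-defomation}(2). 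In particular each multiplier is abelian, being central in its cover, of one and the same superdimension, so all multipliers are mutually isomorphic; moreover part~(1) then holds for an \emph{arbitrary} free presentation, because $R\cap[F,F]/[R,F]$ is always such a multiplier by Proposition~\ref{pro-stem-defomation}(2) and Theorem~\ref{thm-max-stem-ext}.

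It then remains to show $F/X\cong F/X'$ for any two stem denominators $X,X'$ of the fixed finitely generated presentation. The key step is a splitting inside $\bar F:=F/[R,F]$, a finite-dimensional Lie superalgebra in which $\bar R:=R/[R,F]$ is central and $[\bar F,\bar F]=[F,F]/[R,F]$. For a stem denominator $X$ the image $\bar X:=X/[R,F]$ is a central ideal of $\bar F$ with $\bar X\cap[\bar F,\bar F]=0$, because $X\cap[F,F]=[R,F]$ by Proposition~\ref{pro-stem-defomation}(1). Thus $\bar X$ embeds into the abelian quotient $\bar F/[\bar F,\bar F]$; choosing a subsuperspace complement of its image there and pulling it back gives a subsuperspace $\bar H\supseteq[\bar F,\bar F]$ with $\bar F=\bar H\oplus\bar X$. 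Since $\bar H$ contains $[\bar F,\bar F]$ it is an ideal, and since $\bar X$ is central this is a direct decomposition of Lie superalgebras, with $\bar H\cong\bar F/\bar X\cong F/X$. Applying this to $X$ and to $X'$ gives
$$(F/X)\oplus\big(X/[R,F]\big)\ \cong\ \bar F\ \cong\ (F/X')\oplus\big(X'/[R,F]\big).$$

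Now $X/[R,F]$ and $X'/[R,F]$ are central, hence abelian, ideals of $\bar F$, and by Proposition~\ref{pro-stem-defomation}(1) each is a subsuperspace complement of the multiplier $R\cap[F,F]/[R,F]$ inside $\bar R$; so they have the same superdimension, are isomorphic as abelian Lie superalgebras, and are finite-dimensional. I then apply Lemma~\ref{lem1234} with $A_1=X/[R,F]$, $A_2=X'/[R,F]$, $B_1=F/X$, $B_2=F/X'$ to conclude $F/X\cong F/X'$; combined with the first paragraph this gives uniqueness of the cover (and justifies the notation $\mathcal{C}(L)$, $\mathcal{M}(L)$). Finally, for part~(2): for an arbitrary free presentation $0\to R\to F\to L\to 0$ and a stem denominator $X$, the stem deformation $0\to R/X\to F/X\to L\to 0$ is a maximal stem extension by Theorem~\ref{thm-max-stem-ext}, so $F/X\cong\mathcal{C}(L)$; and for any subsuperspace $Y\supseteq[F,F]$ with $F/[R,F]=X/[R,F]\oplus Y/[R,F]$, the composite $Y/[R,F]\hookrightarrow F/[R,F]\twoheadrightarrow F/X$ is a bijective Lie superalgebra homomorphism — well defined because $X$ is an ideal of $F$ (as $[R,F]\subseteq X\subseteq R$), multiplicative because $Y/[R,F]$ is a subalgebra (as $Y\supseteq[F,F]$), and bijective because $Y/[R,F]$ is a vector-space complement of $X/[R,F]$ — so that $\mathcal{C}(L)\cong F/X\cong Y/[R,F]$.

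I expect the main obstacle to be the splitting $\bar F\cong(F/X)\oplus(X/[R,F])$: a stem denominator $X$ need not split off inside $F$, so one has to observe that modulo $[R,F]$ it becomes a central ideal meeting the derived subalgebra trivially, which is exactly the configuration admitting an ideal complement. The second delicate point is bookkeeping with the finiteness hypotheses so that Lemma~\ref{lem1234} is applicable, which forces the uniqueness argument to be run over a finitely generated free presentation, even though the formulas in parts~(1) and~(2) then hold for every free presentation.
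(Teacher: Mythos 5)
Your proposal is correct and follows essentially the same route as the paper: reduce via Theorem~\ref{thm-max-stem-ext} to comparing stem deformations of a finitely generated free presentation, split $F/[R,F]$ as a direct sum of ideals $X/[R,F]\oplus Y/[R,F]$ with $Y\supseteq[F,F]$, identify $X/[R,F]\cong R/(R\cap[F,F])\cong(R+[F,F])/[F,F]$ as a finite-dimensional abelian summand independent of $X$, and cancel with Lemma~\ref{lem1234} to get $F/X\cong Y/[R,F]$ independent of $X$. Note that, exactly as in the paper's own proof, your argument yields $\mathcal{M}(L)\cong R\cap[F,F]/[R,F]$ (the denominator $[R,R]$ in the printed statement of part (1) is evidently a misprint for $[R,F]$), and your extra checks (that $Y/[R,F]\to F/X$ is an isomorphism and the finiteness bookkeeping) only make explicit what the paper leaves implicit.
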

\begin{proof}
Let $X$ be a stem denominator of  (\ref{frethemeq54svls}). By Theorem \ref{thm-max-stem-ext}, it suffices to show that both $R/X$ and $F/X$ are independent of the choice of stem denominator  $X$. Moreover, by Theorem \ref{thm-max-stem-ext} again, one may assume
that $F$ is a free Lie superalgebra generated by a finite homogeneous set, since $L$ is finite-dimensional. Since $X$ is a stem denominator, we have
$[F,F]\cap X=[R, F]$ and then $[F,F]/[R, F]\cap X/[R, F]=0$. Then there is a subsuperspace $Y$ of $F$ containing $[F,F]$ such that we have a direct sum decomposition of superspaces:
\begin{equation}\label{eqyighesliu}
F/[R, F]=X/[R, F]\oplus Y/[R, F].
\end{equation}
Since $Y\supset [F,F]$, we have $Y\lhd F$. Hence
(\ref{eqyighesliu}) is also a direct sum decomposition of ideals.
Since $X$ is a stem denominator, we also have a direct sum decomposition of ideals:
$$
R/[R, F]=R\cap [F,F]/[R, F]\oplus X/[R, F].
$$
Then, up to Lie superalgebra isomorphisms, $X/[R, F]$ is  independent of the choice of $X$. Moreover,
$$
X/[R, F]\cong R/R\cap[F,F]\cong (R+[F,F])/[F,F]\subset F/[F,F].
$$
is  finite-dimensional. Then by Lemma \ref{lem1234}, it follows from (\ref{eqyighesliu}) that, up to Lie superalgebra isomorphisms,  $F/X\cong Y/[R,F]$ is independent of the choice of $X$.
\end{proof}

 \section{Heisenberg superalgebras}

In this section, we compute multipliers, covers and maximal stem extensions for Heisenberg  superalgebras of odd center.
Recall that a finite-dimensional Lie superalgebra $\mathfrak{g}$ is called a Heisenberg (Lie) superalgebra provided that $\mathfrak{g}^{2}=\mathrm{Z}(\mathfrak{g})$ and $\dim\mathrm{Z}(\mathfrak{g})=1$. Heisenberg superalgebras consist of two types according to the parity of the ceter (see \cite{RSV}).
\begin{enumerate}
\item[(1)] A Heisenberg superalgebra of even center, denoted by $H(p, q)$ with $p+q\geq1$, has a homogeneous basis (called standard)
$$\{u_{1}, u_{2}, \ldots, u_{p}, v_{1}, v_{2}, \ldots, v_{p}, z \mid w_{1}, w_{2}, \ldots, w_{q}\},$$

where
$$
|u_{i}|=|v_{j}|=|z|=\bar{0},\ |w_{k}|=\bar{1}; \ 1\leq i \leq p,\ 1\leq j \leq p, \ 1\leq k \leq q,
$$
and the multiplication is given by
$$
[u_{i}, v_{i}]=-[v_{i}, u_{i}]=z,\ [w_{k}, w_{k}]=z
$$
and the other brackets of basis elements vanishing.

\item[(2)] A Heisenberg superalgebra of odd center, denoted by $H(n)$ with $n\geq 1$, has a homogeneous basis (called standard)
$$\{u_{1}, u_{2}, \ldots, u_{n} \mid z, w_{1}, w_{2}, \ldots, w_{n}\},$$

where $$|u_{i}|=\bar{0},\ |w_{j}|=|z|=\bar{1};\ 1\leq i \leq m,\ 1\leq j \leq n,$$
and the multiplication is given by
$$
[u_{i}, w_{i}]=-[w_{i}, u_{i}]=z
$$
and the other brackets of basis elements vanishing.
\end{enumerate}

In \cite[Proposition 4.4]{zhang-liu} (see also \cite[Theorem 4.3]{S.Nayak}), the authors characterize the multipliers of  Heisenberg  superalgebras of even center:
\begin{equation*}
\mathrm{sdim}\mathcal{M}(H(p,q))=\left\{
               \begin{array}{ll}
                 (2p^{2}-p+\frac{1}{2}q^{2}+\frac{1}{2}q-1, 2pq), & \hbox{$p+q\geq2,$}\\
                 (0,0), & \hbox{$p=0,q=1,$}\\
                 (2,0), & \hbox{$p=1,q=0.$}
        \end{array}
       \right.
\end{equation*}

Let us give a maximal stem extension of the  Heisenberg   superalgebra  $H(n)$ of odd center.
 Suppose that
   \begin{equation*}
0\longrightarrow W\longrightarrow K{\longrightarrow} H(n)\longrightarrow 0
\end{equation*}
is a   stem extension. Then $W\subset K^{2}\cap \mathrm{Z}(K)$ and
$K/W\cong H(n)$.
Then $K/W$ has a standard basis
$$(a_{1}+W, \ldots, a_{n}+W \mid c+W, b_{1}+W, \ldots, b_{n}+W),$$
 where $a_{i},  b_{i}, c\in K$ with $|a_{i}|=\overline{0}$, $|b_{i}|=|c|=\overline{1}$. So one may assume that
   \begin{alignat*}{2}
     &[a_{i}, a_{j}]=y_{i,j}, && \quad 1\leq i < j \leq n,\\
    &[a_{i}, b_{i}]=c+y_{i}, && \quad 1\leq i \leq n,\\
    &[a_{i}, b_{j}]=z_{i,j}, && \quad 1\leq i \neq j \leq n,\\
    &[a_{i}, c]=m_{i}, && \quad 1\leq i \leq n,\\
   &[b_{i}, b_{j}]=w_{i,j}, && \quad 1\leq i \leq j \leq n,\\
     &[b_{i}, c]=n_{i}, && \quad 1\leq i \leq n,\\
    &[c, c]=t,
   \end{alignat*}
  where $y_{i,j},  y_{i},z_{i,j}, m_{i},w_{i,j}, n_{i}, t\in W$ and $|y_{i,j}|=|n_{i}|=|w_{i,j}|=|t|=\overline{0}$, $|y_{i}|=|z_{i,j}|=|m_{i}|=\overline{1}$. Furthermore, without loss of generality, one may assume that  $y_{1}=0$.
  Note that \begin{eqnarray*}
  &&n_i=[b_i,[a_i,b_i]]=\frac{1}{2}[a_{i},[b_{i}, b_{i}]]=0,
  \\
  &&t=[c,[a_{1},b_{1}]]=0.
  \end{eqnarray*}
 Then we rewrite
 \begin{alignat*}{2}
     &[a_{i}, a_{j}]=y_{i,j}, && \quad 1\leq i < j \leq n,\\
     &[a_{1}, b_{1}]=c,\\
    &[a_{i}, b_{i}]=c+y_{i}, && \quad 2\leq i \leq n,\\
    &[a_{i}, b_{j}]=z_{i,j}, && \quad 1\leq i \neq j \leq n,\\
    &[a_{i}, c]=m_{i}, && \quad 1\leq i \leq n,\\
   &[b_{i}, b_{j}]=w_{i,j}, && \quad 1\leq i \leq j \leq n,\\
     &[b_{i}, c]=0, && \quad 1\leq i \leq n,\\
    &[c, c]=0.
   \end{alignat*}
  Now it is obvious that  $W$ is spanned by the following elements:
   \begin{alignat*}{2}
    &y_{i,j}, &&\quad 1\leq i < j \leq n,\\
   &y_{i}, &&\quad 2\leq i \leq n,\\
   &z_{i,j}, &&\quad 1\leq i \neq j \leq n,\\
  & m_{i}, &&\quad 1\leq i \leq n,\\
  & w_{i,j}, &&\quad 1\leq i \leq j \leq n.
   \end{alignat*}
  Then $K$ is spanned by the  elements displayed above and $a_{1}, \ldots, a_{n}, b_{1}, \ldots, b_{n}$ and $c$.

 \textit{Case 1}: $n=1$. Suppressing all the subscripts, we have
   \begin{eqnarray*}
   [a, b]=c, \quad
   [a, c]=m,\quad
  [b, b]=w,\quad
  [b, c]=[c, c]=0.
   \end{eqnarray*}
   Then $W$ is generated by $w, m$.
    Hence $\mathrm{sdim} W\leq (1\mid 1)$.

    Now let $\widehat{H}(1)$ be a superspace with a basis  $(\widehat{a}, \widehat{w}\mid \widehat{b}, \widehat{c}, \widehat{m})$. Then  $\widehat{H}(1)$ becomes a Lie superalgebra by letting
     \begin{eqnarray*}
  [\widehat{a}, \widehat{b}]=-[\widehat{b}, \widehat{a}]=\widehat{c},\quad
 [\widehat{a}, \widehat{c}]=-[\widehat{c}, \widehat{a}]=\widehat{m}, \quad
   [\widehat{b}, \widehat{b}]=\widehat{w}
   \end{eqnarray*}
    and the other brackets of basis elements vanish.
       Let $\widehat{MH}(1)$ be the subsuperspace spanned by $\widehat{w}, \widehat{m}$. Then $\widehat{MH}(1)\subset \widehat{H}(1)^{2}\cap \mathrm{Z}(\widehat{H}(1))$ and $\widehat{H}(1)/\widehat{MH}(1)\cong H(1).$
        Since  $\mathrm{sdim} \widehat{MH}(1)= (1\mid 1)$, one sees that
        $$
        0\longrightarrow \widehat{MH}(1)\longrightarrow \widehat{H}(1){\longrightarrow} H(1)\longrightarrow 0
        $$
        is a maximal stem extension of $H(1)$.
        In particular, $\widehat{MH}(1)$ is a multiplier and $\widehat{H}(1)$ a cover of $H(1)$.

  \textit{Case 2}: $n\geq 2$. Consider any $i$ and take $j\neq i$. One may check that
    \begin{equation*}
    m_{i}=[a_{i}, [a_{j}, b_{j}]]=0.
   \end{equation*}
    Then  $W$ is spanned by the following elements:
   \begin{alignat*}{2}
   &y_{i,j}, && \quad 1\leq i < j \leq n,\\
   &y_{i}, && \quad 2\leq i \leq n,\\
   &z_{i,j}, && \quad 1\leq i \neq j \leq n,\\
   &w_{i,j}, && \quad 1\leq i \leq j \leq n.
  \end{alignat*}
  Hence $\mathrm{sdim} W\leq (n^{2}\mid n^{2}-1)$.

    Now let $\widehat{H}(n)$  with $n\geq 2$ be a superspace with a basis consisting of  even elements
\begin{alignat*}{2}
  &\widehat{a}_{i}, && \quad 1\leq i\leq n,\\
  &\widehat{y}_{i, j}, && \quad 1\leq i<j\leq n,\\
  &\widehat{w}_{i, j}, && \quad 1\leq i\leq j\leq n,
  \end{alignat*}
   and odd  elements
    \begin{alignat*}{2}
    &\widehat{b}_{i}, && \quad 1\leq i\leq n,
     \\
     &\widehat{c},\\
      &\widehat{y}_{i}, && \quad 2\leq i \leq n,\\
       &\widehat{z}_{i, j}, && \quad 1\leq i \neq j \leq n.
       \end{alignat*}
      Then one may check that $\widehat{H}(n)$ becomes a Lie superalgebra  by letting
\begin{alignat*}{2}
&[\widehat{a}_{i}, \widehat{a_{j}}]=-[\widehat{a_{j}}, \widehat{a}_{i}]=\widehat{y}_{i,j}, && \quad 1\leq i < j \leq n,\\
&[\widehat{a}_{1}, \widehat{b}_{1}]=-[\widehat{b}_{1}, \widehat{a}_{1}]=\widehat{c},\\
&[\widehat{a}_{i}, \widehat{b}_{i}]=-[\widehat{b}_{i}, \widehat{a}_{i}]=\widehat{c}+\widehat{y}_{i}, && \quad  2\leq i \leq n,\\
&[\widehat{a}_{i}, \widehat{b}_{j}]=-[\widehat{b}_{j},\widehat{a}_{i}]=\widehat{z}_{i,j}, && \quad 1\leq i \neq j \leq n,\\
&[\widehat{b}_{i}, \widehat{b}_{j}]=-[\widehat{b}_{j}, \widehat{b}_{i}]=\widehat{w}_{i,j}, && \quad 1\leq i < j \leq n,
  \end{alignat*}
and the other brackets of basis elements vanish.
       Let $\widehat{MH}(n)$ be the subsuperspace spanned by $\widehat{y}_{i, j}, \widehat{w}_{i, j},\widehat{y}_{i}, \widehat{z}_{i, j} $. Then $\widehat{MH}(n)\subset \widehat{H}(n)^{2}\cap \mathrm{Z}(\widehat{H}(n))$ and $\widehat{H}(n)/\widehat{MH}(n)\cong H(n).$
       Since  $\mathrm{sdim} \widehat{MH}(n)= (n^{2}\mid n^{2}-1)$, one sees that
       $$
        0\longrightarrow \widehat{MH}(n)\longrightarrow \widehat{H}(n){\longrightarrow} H(n)\longrightarrow 0
        $$
        is a maximal stem extension of. In particular, $\widehat{MH}(n)$ is a multiplier and $\widehat{H}(n)$ a cover of $H(n)$.

Summarizing, we have
\begin{theorem} Let $n$ be a positive integer. Then
$$
0\longrightarrow \widehat{MH}(n)\longrightarrow \widehat{H}(n){\longrightarrow} H(n)\longrightarrow 0
$$
is a maximal stem extension of the Heisenberg superalgebra $H(n)$ of odd center. In particular,
$\widehat{H}(n)$ is the cover of $H(n)$ and
\begin{equation*}
    \mathrm{sdim}\mathcal{M}(H(n))=\left\{
               \begin{array}{ll}
                 (n^{2}\mid n^{2}-1), & \hbox{$n\geq2$}\\
                 (1\mid 1), & \hbox{$n=1.$}\\
        \end{array}
       \right.
   \end{equation*}
\end{theorem}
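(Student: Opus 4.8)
The plan is to pin down the exact maximum of $\mathrm{sdim}\,W$ as $0\longrightarrow W\longrightarrow K\longrightarrow H(n)\longrightarrow 0$ ranges over all stem extensions of $H(n)$, and then to check that the explicit extension built from $\widehat H(n)$ realises this maximum. Since the partial order~(\ref{po123}) on superdimensions is the componentwise order, a stem extension whose kernel attains the componentwise‑largest admissible superdimension is automatically maximal; and by Theorem~\ref{multiplier-unique-free} the multiplier and cover are unique up to Lie superalgebra isomorphism, so once $\widehat H(n)$ is exhibited as a cover it must be $\mathcal C(H(n))$ and $\widehat{MH}(n)$ must be $\mathcal M(H(n))$, after which the superdimension formula is read off from the given basis of $\widehat{MH}(n)$.

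\emph{Upper bound.} Let $0\longrightarrow W\longrightarrow K\longrightarrow H(n)\longrightarrow 0$ be a stem extension; centrality together with $W\subseteq K^{2}$ gives $W\subseteq K^{2}\cap\mathrm Z(K)$. Lift a standard basis of $H(n)$ to homogeneous $a_{i},b_{i},c\in K$ (with $|a_{i}|=\bar 0$, $|b_{i}|=|c|=\bar 1$) and write the most general compatible brackets, with all ``error terms'' $y_{i,j},y_{i},z_{i,j},m_{i},w_{i,j},n_{i},t$ lying in $W$; after replacing $c$ by $[a_{1},b_{1}]$ we may assume $y_{1}=0$. A short check shows $W$ is spanned by these error terms. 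I would then feed the super Jacobi identity the triples $(b_{i},a_{i},b_{i})$ and $(c,a_{1},b_{1})$ --- using the identity $[y,[y,x]]=\tfrac12[[y,y],x]$ valid for odd $y$ (which needs $\mathrm{char}\,\mathbb F\ne 2$) --- to obtain $n_{i}=0$ and $t=0$, and, when $n\ge 2$, the triples $(a_{i},a_{j},b_{j})$ with $j\ne i$ to obtain $m_{i}=0$. What remains spans $W$, giving $\mathrm{sdim}\,W\le(1\mid 1)$ for $n=1$ and $\mathrm{sdim}\,W\le(n^{2}\mid n^{2}-1)$ for $n\ge 2$ (the even count is $\binom n2+\binom{n+1}2=n^{2}$, from the $y_{i,j}$ and $w_{i,j}$, and the odd count is $(n-1)+n(n-1)=n^{2}-1$, from the $y_{i}$ and $z_{i,j}$; for $n=1$ only $w$ and $m$ survive).

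\emph{Construction and conclusion.} Next I would verify that the data defining $\widehat H(n)$ do satisfy the super Jacobi identity --- this reduces to finitely many triples of basis vectors, and is immediate for all but a handful because at most one of the three pairwise brackets is nonzero and its value is central --- so $\widehat H(n)$ is a genuine Lie superalgebra. One checks directly that $\widehat{MH}(n)\subseteq\widehat H(n)^{2}\cap\mathrm Z(\widehat H(n))$ and that the assignment of $\widehat a_{i},\widehat b_{i},\widehat c$ to the corresponding standard generators induces an isomorphism $\widehat H(n)/\widehat{MH}(n)\cong H(n)$, so $0\longrightarrow\widehat{MH}(n)\longrightarrow\widehat H(n)\longrightarrow H(n)\longrightarrow 0$ is a stem extension with $\mathrm{sdim}\,\widehat{MH}(n)$ equal to the bound of the previous step. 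Hence its kernel has maximal superdimension; by Definition~\ref{def-multiplier-cover} it is a maximal stem extension, so $\widehat H(n)$ is a cover and $\widehat{MH}(n)$ a multiplier of $H(n)$, and by Theorem~\ref{multiplier-unique-free} these are unique up to isomorphism. Reading off the dimensions of the two graded pieces of $\widehat{MH}(n)$ yields the displayed formula for $\mathrm{sdim}\,\mathcal M(H(n))$.

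\emph{Main obstacle.} The delicate part is the upper‑bound step: one must track the $\mathbb Z_{2}$‑signs carefully in the super Jacobi computations (here $c$ and the $w_{i}$ are \emph{odd}, unlike in the even‑center case treated in~\cite{zhang-liu}), and one must respect the genuine dichotomy at $n=1$, where no index $j\ne i$ is available, so the term $m_{1}$ is \emph{not} forced to vanish, and it is precisely this term that makes $\mathrm{sdim}\,\mathcal M(H(1))=(1\mid 1)$ rather than $(1\mid 0)$. Checking the Jacobi identity for $\widehat H(n)$ is a second, purely mechanical, hurdle.
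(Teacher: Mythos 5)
Your proposal is correct and follows essentially the same route as the paper: lift a standard basis, kill the terms $n_i$, $t$ (and $m_i$ for $n\ge 2$) via the super Jacobi identity to get the componentwise upper bound $(1\mid 1)$ resp.\ $(n^2\mid n^2-1)$ on $\mathrm{sdim}\,W$, then exhibit $\widehat H(n)$ attaining it and invoke the uniqueness of multiplier and cover. No substantive differences to report.
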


\section{Model filiform superalgebras}

Suppose that  $n$ is a positive integer and $m$ a nonnegative integer. Let $F(n,m)$ be a Lie superalgebra with  basis
\begin{equation}\label{q2}
\{x_{0}, \ldots, x_{n}\mid y_{1}, \ldots, y_{m}\},
\end{equation}
where
$$
|x_{i}|=\bar{0},\ |y_{j}|=\bar{1}; \ 0\leq i\leq n , \ 1\leq j\leq m,
$$
and  multiplication given by
$$
[x_{0}, x_{i}]=x_{i+1},\ [x_{0}, y_{j}]=y_{j+1}
$$
and the other brackets of basis elements vanishing.
It is easy to see that $F(n,m)$ is a nilpotent Lie superalgebra of super-nilindex $(n,m)$ (see\cite{yang-liu}, for example).  Note that $F(1,0)$ is an abelian Lie algebra and   $ F(1,1)$ are an abelian Lie superalgebra. Note that $F(n,0)$ with $n>1$ is just the model filiform Lie algebra of nilindex $n$ (see \cite{A-F}, for example). We call $F(n,m)$ with $(n,m)\neq (1,0), (1,1)$ the \textit{model filiform Lie superalgebra} of super-nilindex $(n,m)$. For further information on filiform Lie (super)algebras, the reader is referred to \cite{A-F,M-G}, for example.

   Let us give a maximal stem extension of the  model filiform Lie superalgebra  $F(n,m)$.
 Suppose that
   \begin{equation*}
0\longrightarrow W\longrightarrow K{\longrightarrow} F(n,m)\longrightarrow 0
\end{equation*}
is a   stem extension. Then $W\subset K^{2}\cap \mathrm{Z}(K)$ and
$K/W\cong F(n,m)$.
Then $K/W$ has a standard basis
$$(a_{0}+W, a_{1}+W, \ldots, a_{n}+W \mid b_{1}+W, \ldots, b_{m}+W),$$
 where $a_{i},  b_{j}\in K$ with $|a_{i}|=\overline{0}$, $|b_{j}|=\overline{1}$. Then we have
  \begin{alignat*}{2}
   &[a_{0}, a_{i}]=a_{i+1}+x_{i},&& \quad 1\leq i \leq n-1,\\
   &[a_{i}, a_{j}]=y_{i,j},&& \quad 1\leq i < j \leq n,\\
   &[a_{0}, b_{j}]=b_{j+1}+y_{j},&& \quad 1\leq j \leq m-1,\\
   &[b_{i}, b_{j}]=z_{i,j},&& \quad 1\leq i \leq j \leq m,\\
   &[a_{i}, b_{j}]=t_{i,j},&& \quad 1\leq i \leq n, 1\leq j \leq m,
   \end{alignat*}
  where $x_{i}, y_{i,j}, y_{j}, z_{i,j}, t_{i,j}\in W$ and $|x_{i}|=|y_{i,j}|=|z_{i,j}|=\overline{0}$, $|y_{j}|=|t_{i,j}|=\overline{1}$. Since $W\subset \mathrm{Z}(K)$, without loss of generality, one may assume that $x_{i}=y_{j}=0$.
Moreover, using the super Jacobi identity one may check the following identities:
   \begin{itemize}
  \item[(1)] $y_{i,j+1}=-y_{i+1,j}$ for $1\leq i,j\leq n-1$.
  \item[(2)] $t_{i,j+1}=-t_{i+1,j}$ for $1\leq i\leq n-1$, $1\leq j\leq m-1$.
  \item[(3)] $t_{1,j+1}=0$ for $1\leq j\leq m-1$.
  \item[(4)] $t_{i+1,1}=0$ for $1\leq i\leq n-1$.
  \item[(5)] $z_{i,j+1}=-z_{i+1,j}$ for $1\leq i\neq j\leq m-1$.
  \item[(6)] $2z_{j,j+1}=0$ for $1\leq j\leq m-1$.

 \end{itemize}

  \textit{Case 1}: $n\geq 2$, $m=0$. In this case, $F(n,0)$ is a model filiform Lie algebra.
  By $(1)$,
  we rewrite
  \begin{alignat*}{2}
   &[a_{0}, a_{i}]=a_{i+1},&& \quad 1\leq i \leq n-1,\\
   &[a_{1}, a_{2}]=y_{1,2},\\
   &[a_{n-1}, a_{n}]=y_{n-1,n},\\
   &[a_{i}, a_{j}]=y_{i,j}=-y_{i+1,j-1}=-[a_{i+1}, a_{j-1}], && \quad 2<j-i\;\mbox{being odd}.
   \end{alignat*}
    Now it is obvious that  $W$ is spanned by the elements $y_{i,i+1}$, $1\leq i\leq n-1.$
 Hence $\mathrm{sdim} W\leq (n-1\mid 0)$. Then $K$ is spanned by the  elements displayed above and $a_{0}, a_{1}, \ldots, a_{n}$.

     Now let $\widehat{F}(n,0)$ with $n\geq 2$ be a superspace with a basis consisting of  even elements
\begin{alignat*}{2}
  &\widehat{a}_{i}, && \quad 0\leq i\leq n,\\
  &\widehat{y}_{j}, && \quad 2\leq j\leq n.
  \end{alignat*}
  Then one may check that $\widehat{F}(n,0)$ becomes a Lie superalgebra  by letting
\begin{alignat*}{2}
&[\widehat{a}_{0}, \widehat{a_{i}}]=-[\widehat{a_{i}}, \widehat{a}_{0}]=\widehat{a}_{i+1}, && \quad 1\leq i \leq n-1,\\
&[\widehat{a}_{1}, \widehat{a}_{2}]=-[\widehat{a}_{2}, \widehat{a}_{1}]=\widehat{y}_{2},\\
&[\widehat{a}_{n-1}, \widehat{a}_{n}]=-[\widehat{a}_{n}, \widehat{a}_{n-1}]=\widehat{y}_{n},\\
&[\widehat{a}_{i}, \widehat{a}_{j}]=-[\widehat{a}_{j}, \widehat{a}_{i}]=-[\widehat{a}_{i+1}, \widehat{a}_{j-1}]=[\widehat{a}_{j-1}, \widehat{a}_{i+1}]=\widehat{y}_{j-1}, && \quad 2<j-i\;\mbox{being odd},
\end{alignat*}
and the other brackets of basis elements vanish.
       Let $\widehat{MF}(n,0)$ be the subsuperspace spanned by all $\widehat{y}_{j} $. Then $\widehat{MF}(n,0)\subset \widehat{F}(n,0)^{2}\cap \mathrm{Z}(\widehat{F}(n,0))$ and $\widehat{F}(n,0)/\widehat{MF}(n,0)\cong F(n,0).$
       Since  $\mathrm{sdim} \widehat{MF}(n,0)= (n-1\mid 0)$, one sees that
       $$
        0\longrightarrow \widehat{MF}(n,0)\longrightarrow \widehat{F}(n,0){\longrightarrow} F(n,0)\longrightarrow 0
        $$
        is a maximal stem extension of $F(n,0)$. In particular, $\widehat{MF}(n,0)$ is a multiplier and $\widehat{F}(n,0)$ a cover of $F(n,0)$.

 \textit{Case 2}: $n\geq 2$, $m=1$.
 By $(1)$ and $(4)$,
  we rewrite
  \begin{alignat*}{2}
   &[a_{0}, a_{i}]=a_{i+1},&& \quad 1\leq i \leq n-1,\\
   &[a_{1}, a_{2}]=y_{1,2},\\
   &[a_{n-1}, a_{n}]=y_{n-1,n},\\
   &[a_{i}, a_{j}]=y_{i,j}=-y_{i+1,j-1}=-[a_{i+1}, a_{j-1}], && \quad 2<j-i\;\mbox{being odd},\\
   &[a_{1}, b_{1}]=t_{1,1},\\
   &[b_{1}, b_{1}]=z_{1,1}.
   \end{alignat*}
    Now it is obvious that  $W$ is spanned by the elements $t_{1,1}$, $z_{1,1}$, $y_{i,i+1}$, $1\leq i\leq n-1$.
    Hence $\mathrm{sdim} W\leq (n\mid 1)$. Then $K$ is spanned by the  elements displayed above and $a_{0}, a_{1}, \ldots, a_{n}$.

     Now let $\widehat{F}(n,1)$ with $n\geq 2$ be a superspace with a basis consisting of  even elements
 \begin{alignat*}{2}
  &\widehat{a}_{i}, && \quad 0\leq i\leq n,\\
  &\widehat{y}_{j}, && \quad 2\leq j\leq n,\\
   &\widehat{z},
   \end{alignat*}
  and odd  element $\widehat{t}$.
 Then one may check that $\widehat{F}(n,1)$ becomes a Lie superalgebra  by letting
\begin{alignat*}{2}
&[\widehat{a}_{0}, \widehat{a_{i}}]=-[\widehat{a_{i}}, \widehat{a}_{0}]=\widehat{a}_{i+1}, && \quad 1\leq i \leq n-1,\\
&[\widehat{a}_{1}, \widehat{a}_{2}]=-[\widehat{a}_{2}, \widehat{a}_{1}]=\widehat{y}_{2},\\
&[\widehat{a}_{n-1}, \widehat{a}_{n}]=-[\widehat{a}_{n}, \widehat{a}_{n-1}]=\widehat{y}_{n},\\
&[\widehat{a}_{i}, \widehat{a}_{j}]=-[\widehat{a}_{j}, \widehat{a}_{i}]=-[\widehat{a}_{i+1}, \widehat{a}_{j-1}]=[\widehat{a}_{j-1}, \widehat{a}_{i+1}]=\widehat{y}_{j-1}, && \quad 2<j-i\;\mbox{being odd},\\
&[\widehat{a}_{1}, \widehat{b_{1}}]=-[\widehat{b_{1}}, \widehat{a}_{1}]=\widehat{t},\\
&[\widehat{b}_{1}, \widehat{b_{1}}]=-[\widehat{b_{1}}, \widehat{b}_{1}]=\widehat{z},
\end{alignat*}
and the other brackets of basis elements vanish.
       Let $\widehat{MF}(n,1)$ be the subsuperspace spanned by $\widehat{t},\widehat{z}$ and all $\widehat{y}_{j}$. Then
       $\widehat{MF}(n,1)\subset \widehat{F}(n,1)^{2}\cap \mathrm{Z}(\widehat{F}(n,1))$ and $\widehat{F}(n,1)/\widehat{MF}(n,1)\cong F(n,1).$
       Since  $\mathrm{sdim} \widehat{MF}(n,1)= (n\mid 1)$, one sees that
       $$
        0\longrightarrow \widehat{MF}(n,1)\longrightarrow \widehat{F}(n,1){\longrightarrow} F(n,1)\longrightarrow 0
        $$
        is a maximal stem extension of $F(n,1)$. In particular, $\widehat{MF}(n,1)$ is a multiplier and $\widehat{F}(n,1)$ a cover of $F(n,1)$.

  \textit{Case 3}: $n\geq 2$, $m\geq 2$. By $(1),(2),(5)$ and $(6)$,
 we rewrite
  \begin{alignat*}{2}
   &[a_{0}, a_{i}]=a_{i+1},&& \quad 1\leq i \leq n-1,\\
   &[a_{1}, a_{2}]=y_{1,2},\\
   &[a_{n-1}, a_{n}]=y_{n-1,n},\\
   &[a_{i}, a_{j}]=y_{i,j}=-y_{i+1,j-1}=-[a_{i+1}, a_{j-1}], && \quad  2<j-i\;\mbox{being odd},\\
   &[a_{0}, b_{j}]=b_{j+1},&& \quad 1\leq j \leq m-1,\\
   &[a_{1}, b_{1}]=t_{1,1},\\
   &[a_{n}, b_{m}]=t_{n,m},\\
   &[a_{i}, b_{j}]=t_{i,j}=-t_{i+1,j-1}=-[a_{i+1}, b_{j-1}],&& \quad 1\leq i\leq n-1, 2\leq j \leq m.
   \end{alignat*}
    Now it is obvious that  $W$ is spanned by the following elements:
   \begin{alignat*}{2}
   &y_{i,i+1},&& \quad 1\leq i\leq n-1,\\
   &t_{i,1},&& \quad 1\leq i \leq n,\\
   &t_{n,j},&& \quad 2\leq j \leq m.
   \end{alignat*}
  Hence $\mathrm{sdim} W\leq (n-1\mid n+m-1)$. Then $K$ is spanned by the  elements displayed above and $a_{0}, a_{1}, \ldots, a_{n}, b_{1}, \ldots, b_{m}$.

     Now let $\widehat{F}(n,m)$ with $n,m\geq 2$ be a superspace with a basis consisting of  even elements
 \begin{alignat*}{2}
  &\widehat{a}_{i}, && \quad 0\leq i\leq n,\\
  &\widehat{y}_{j},&& \quad 2\leq j\leq n,
  \end{alignat*}
   and odd  elements
    \begin{alignat*}{2}
    &\widehat{b}_{p}, && \quad 1\leq p\leq m,
     \\
      &\widehat{t}_{q},&& \quad 2\leq q\leq m+n.
       \end{alignat*}
      Then one may check that $\widehat{F}(n,m)$ becomes a Lie superalgebra  by letting
\begin{alignat*}{2}
&[\widehat{a}_{0}, \widehat{a_{i}}]=-[\widehat{a_{i}}, \widehat{a}_{0}]=\widehat{a}_{i+1}, && \quad 1\leq i \leq n-1,\\
&[\widehat{a}_{1}, \widehat{a}_{2}]=-[\widehat{a}_{2}, \widehat{a}_{1}]=\widehat{y}_{2},\\
&[\widehat{a}_{n-1}, \widehat{a}_{n}]=-[\widehat{a}_{n}, \widehat{a}_{n-1}]=\widehat{y}_{n},\\
&[\widehat{a}_{i}, \widehat{a}_{j}]=-[\widehat{a}_{j}, \widehat{a}_{i}]=-[\widehat{a}_{i+1}, \widehat{a}_{j-1}]=[\widehat{a}_{j-1}, \widehat{a}_{i+1}]=\widehat{y}_{j-1}, && \quad 2<j-i\;\mbox{being odd},\\
&[\widehat{a}_{0}, \widehat{b}_{p}]=-[\widehat{b}_{p}, \widehat{a}_{0}]=\widehat{b}_{p+1}, && \quad 1\leq p \leq m-1,\\
&[\widehat{a}_{1}, \widehat{b}_{1}]=-[\widehat{b}_{1}, \widehat{a}_{1}]=\widehat{t}_{2},\\
&[\widehat{a}_{n}, \widehat{b}_{m}]=-[\widehat{b}_{m}, \widehat{a}_{n}]=\widehat{t}_{m+n},\\
&[\widehat{a}_{i}, \widehat{b}_{p}]=-[\widehat{b}_{p}, \widehat{a}_{i}]=-[\widehat{a}_{i+1}, \widehat{b}_{p-1}]=[\widehat{b}_{p-1}, \widehat{a}_{i+1}]=\widehat{t}_{i+p}, && \quad 1\leq i\leq n-1, 2\leq p \leq m.
  \end{alignat*}
and the other brackets of basis elements vanish.
       Let $\widehat{MF}(n,m)$ be the subsuperspace spanned by all $\widehat{y}_{j},\widehat{t}_{q}$. Then $\widehat{MF}(n,m)\subset \widehat{F}(n,m)^{2}\cap \mathrm{Z}(\widehat{F}(n,m))$ and $\widehat{F}(n,m)/\widehat{MF}(n,m)\cong F(n,m).$
       Since  $\mathrm{sdim} \widehat{MF}(n,m)= (n-1\mid m+n-1)$, one sees that
       $$
        0\longrightarrow \widehat{MF}(n,m)\longrightarrow \widehat{F}(n,m){\longrightarrow} F(n,m)\longrightarrow 0
        $$
        is a maximal stem extension of $F(n,m)$. In particular, $\widehat{MF}(n,m)$ is a multiplier and $\widehat{F}(n,m)$ a cover of $F(n,m)$.

   \textit{Case 4}: $n=1$, $m\geq 2$. By $(3),(5)$ and $(6)$,
  we rewrite
   \begin{alignat*}{2}
   &[a_{0}, b_{j}]=b_{j+1},&& \quad 1\leq j \leq m-1,\\
   &[a_{1}, b_{1}]=t_{1,1}.
   \end{alignat*}
   Now it is obvious that  $W$ is spanned by the element $t_{1,1}.$
   Hence $\mathrm{sdim} W\leq (0\mid 1)$. Then $K$ is spanned by the  elements displayed above and $a_{0}, a_{1}, b_{1}, \ldots, b_{m}$.

     Now let $\widehat{F}(1,m)$ with $m\geq 2$ be a superspace with a basis consisting of  even element $\widehat{a}_{0}, \widehat{a}_{1}$
   and odd  elements $\widehat{t}$, $\widehat{b}_{p}$, $1\leq p\leq m$.

Then one may check that $\widehat{F}(1,m)$ becomes a Lie superalgebra  by letting
\begin{alignat*}{2}
&[\widehat{a}_{0}, \widehat{b}_{p}]=-[\widehat{b}_{p}, \widehat{a}_{0}]=\widehat{b}_{p+1}, && \quad 1\leq p \leq m-1,\\
&[\widehat{a}_{1}, \widehat{b_{1}}]=-[\widehat{b_{1}}, \widehat{a}_{1}]=\widehat{t},
\end{alignat*}
and the other brackets of basis elements vanish.
       Let $\widehat{MF}(1,m)$ be the subsuperspace spanned by $\widehat{t}$. Then $\widehat{MF}(1,m)\subset \widehat{F}(1,m)^{2}\cap \mathrm{Z}(\widehat{F}(1,m))$ and $\widehat{F}(1,m)/\widehat{MF}(1,m)\cong F(1,m).$
       Since  $\mathrm{sdim} \widehat{MF}(1,m)= (0\mid 1)$, one sees that
       $$
        0\longrightarrow \widehat{MF}(1,m)\longrightarrow \widehat{F}(1,m){\longrightarrow} F(1,m)\longrightarrow 0
        $$
        is a maximal stem extension of $F(1,m)$. In particular, $\widehat{MF}(1,m)$ is a multiplier and $\widehat{F}(1,m)$ a cover of $F(1,m)$.

Summarizing, we have:
\begin{theorem} Let $n,m$ be positive integers. Then
$$
0\longrightarrow \widehat{MF}(n,m)\longrightarrow \widehat{F}(n,m){\longrightarrow} F(n,m)\longrightarrow 0
$$
is a maximal stem extension of the model filiform Lie superalgebra $F(n,m)$. In particular,
$\widehat{F}(n,m)$ is the cover of $F(n,m)$ and
\begin{equation*}
    \mathrm{sdim}\mathcal{M}(F(n,m))=\left\{
               \begin{array}{ll}
                 (n-1\mid 0), & \hbox{$n\geq2$, $m=0,$}\\
                 (n\mid 1), & \hbox{$n\geq2$, $m=1,$}\\
                 (n-1\mid n+m-1), & \hbox{$n\geq2$, $m\geq 2,$}\\
                 (0\mid 1), & \hbox{$n=1$, $m\geq2$}.\\
          \end{array}
       \right.
   \end{equation*}
\end{theorem}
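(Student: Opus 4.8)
The plan is to assemble the case analysis already carried out into the three assertions of the theorem. Its backbone is the passage from an arbitrary stem extension $0\to W\to K\to F(n,m)\to 0$ to a tight bound on $\mathrm{sdim}\,W$: lift a standard basis of $F(n,m)$ to elements $a_0,\dots,a_n,b_1,\dots,b_m\in K$, use $W\subseteq\mathrm{Z}(K)$ to adjust the lifts so that $x_i=y_j=0$, and feed the resulting relations into the super Jacobi identity to obtain the identities (1)--(6). Because every bracket involving $W$ vanishes, $K^{2}$ is spanned by $a_2,\dots,a_n,b_2,\dots,b_m$ together with the structure constants $y_{i,j},z_{i,j},t_{i,j}$; since the $a_i,b_j$ are independent modulo $W$, one gets $W\cap\mathrm{span}\{a_i,b_j\}=0$ and hence $W$ is spanned exactly by those structure constants. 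The identities (1)--(6) then reduce that spanning set, in each of the four ranges of $(n,m)$, to the small explicit families listed above, giving $\mathrm{sdim}\,W\le(n-1\mid 0)$, $(n\mid1)$, $(n-1\mid n+m-1)$, $(0\mid1)$ respectively.

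For the matching lower bound I would verify, in each case, that the superspace $\widehat{F}(n,m)$ with the displayed bracket is a Lie superalgebra — the only nontrivial point being the super Jacobi identity, checked on triples of basis elements; since the product of any two generators already lies in the central subspace $\widehat{MF}(n,m)$, every Jacobi expression collapses to a short sign computation, and no spurious relation of the form $2\widehat z=0$ survives precisely because only the independent generators are retained (this is where $\mathrm{char}\,\mathbb{F}\neq2$, which turns "$2z_{j,j+1}=0$" into "$z_{j,j+1}=0$", is used). Then $\widehat{MF}(n,m)\subseteq\widehat{F}(n,m)^{2}\cap\mathrm{Z}(\widehat{F}(n,m))$ is read off the defining relations, and the map $\widehat a_i\mapsto x_i$, $\widehat b_p\mapsto y_p$ with kernel $\widehat{MF}(n,m)$ is a Lie superalgebra isomorphism onto $F(n,m)$. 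Thus $0\to\widehat{MF}(n,m)\to\widehat{F}(n,m)\to F(n,m)\to0$ is a stem extension whose kernel attains the upper bound just proved, so it is a maximal stem extension of $F(n,m)$.

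Maximality together with Definition \ref{def-multiplier-cover} makes $\widehat{MF}(n,m)$ a multiplier and $\widehat{F}(n,m)$ a cover; Theorem \ref{multiplier-unique-free} identifies them up to Lie superalgebra isomorphism with $\mathcal{M}(F(n,m))$ and $\mathcal{C}(F(n,m))$, so $\widehat{F}(n,m)$ is the cover and the stated values of $\mathrm{sdim}\,\mathcal{M}(F(n,m))$ are simply $\mathrm{sdim}\,\widehat{MF}(n,m)$ recorded from the four bases.

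The step I expect to be the real work is the derivation of (1)--(6) and the extraction of a spanning set of $W$ of exactly the claimed size: one must be sure that the index-shift relations $y_{i,j+1}=-y_{i+1,j}$, $t_{i,j+1}=-t_{i+1,j}$, $z_{i,j+1}=-z_{i+1,j}$ combined with the boundary vanishings (3),(4),(6) leave no hidden dependency among the chosen generators — which is exactly what the explicit construction of $\widehat{F}(n,m)$ certifies, since there the corresponding elements are taken to be arbitrary basis vectors. The single most tedious computation is the super Jacobi verification for $\widehat{F}(n,m)$ in the mixed case $n,m\ge2$, but it is purely mechanical because any two generators multiply into the centre.
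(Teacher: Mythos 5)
Your proposal is correct and takes essentially the same approach as the paper: for an arbitrary stem extension $0\to W\to K\to F(n,m)\to 0$ you bound $\mathrm{sdim}\,W$ by lifting a standard basis, using centrality of $W$ and the super Jacobi identities (1)--(6) in the four cases $(n\geq 2,m=0)$, $(n\geq 2,m=1)$, $(n\geq2,m\geq2)$, $(n=1,m\geq2)$, then exhibit the explicit extension $0\to\widehat{MF}(n,m)\to\widehat{F}(n,m)\to F(n,m)\to0$ attaining the bound, and conclude via Definition \ref{def-multiplier-cover} and Theorem \ref{multiplier-unique-free}. The only inaccuracy is your heuristic that the Jacobi check for $\widehat{F}(n,m)$ is trivial because "any two generators multiply into the centre": brackets with $\widehat{a}_0$ such as $[\widehat{a}_0,\widehat{a}_i]=\widehat{a}_{i+1}$ and $[\widehat{a}_0,\widehat{b}_p]=\widehat{b}_{p+1}$ are not central, so the verification does require the index-shift consistency of the defining relations, but it is the same routine computation the paper itself leaves to the reader.
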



\begin{thebibliography}{99}



\bibitem{Batten} P. Batten, Multipliers and covers of Lie algebras,  \textit{Ph.D. Thesis, North Caolina State University}   \textbf{}(1993).



\bibitem{B-M-S} P. Batten, K. Moneyhun and E. Stitzinger, On characterizing nilpotent Lie algebras by their multipliers, \textit{Comm. Algebra}  \textbf{24(14)}(1996): 4319--4330.


\bibitem{Batten-Stitzinger} P. Batten, E. Stitzinger, On covers of Lie algebras, \textit{Comm.  Algebra} \textbf{24}(1996): 4301--4317.

\bibitem{Lindsey} L. R. Bosko, On Schur multipliers of Lie algebras and groups of maximal class, \textit{Internat. J. Algebra Comput.} \textbf{20(6)}(2010): 807--821.

 \bibitem{E-S-D} D. Eshrati, F. Saeedi and H. Darabi, On the multiplier of nilpotent $n$-Lie algebras, \textit{J. Algebra} \textbf{450}(2016): 162--172.
\bibitem{A-F}A. Fialowski, D. Millionschikov, Cohomology of graded Lie algebras of maximal class. \textit{J.
Algebra} \textbf{296}(2006): 157-176.
\bibitem{M-G}M. Gilg, Low-dimensional filiform Lie superalgebras. \textit{Rev. Mat. Complut.} \textbf{14}(2001): 463-478.

\bibitem{Hardy} P. Hardy, On characterizing nilpotent Lie algebras by their multipliers, III, \textit{Comm. Algebra} \textbf{33}(2005): 4205--4210.

\bibitem{Hardy-stitzinger} P. Hardy, E. Stitzinger, On characterizing nilpotent Lie algebras by their multipliers $t(L)=3,4,5,6$, \textit{Comm. Algebra} \textbf{26}(1998): 3527--3539.


\bibitem{zhang-liu} W.-D. Liu, Y.-L. Zhang, Classification of  nilpotent Lie superalgebras of multiplier-rank $\leq 2$, \textit{arXiv:1807.08884}.
\bibitem{yang-liu}W.-D. Liu, Y. Yang, Cohomology of model filiform Lie superalgebra. \textit{J. Algebra Appl.}
\textbf{17}(2018): 1850074.
\bibitem{K.Moneyhun} K. Moneyhun, Isoclinisms in Lie algebras, \textit{Algebras Groups Geom.}  \textbf{11}(1994): 9--22.

\bibitem{S.Nayak} S. Nayak, Multipliers of nilpotent Lie superalgebra, \textit{arXiv:1801.03798v1}.

\bibitem{Nir} P. Niroomand, On dimension of the Schur multilier of nilpotent Lie algebras, \textit{Cent. Eur. J. Math.} \textbf{9(1)}(2011): 57--64.

    \bibitem{Niroomand5} P. Niroomand, F. Johari, The structure, capability and the Schur multiplier of generalized Heisenberg Lie algebras \textit{J. Algebra} \textbf{505}(2018): 482--489.

\bibitem{Niroomand4} P. Niroomand, F. Johari and M. Parvizi, On the capability and Schur multiplier of nilpotent Lie algebra of class two, \textit{Amer. Math. Soc.} \textbf{144}(2016): 4157--4168.


\bibitem{Niroomand3} P. Niroomand, F. G. Russo, A note on the schur multiplier of a nilpotent Lie algebra, \textit{Comm.  Algebra} \textbf{39(4)}(2011): 1293--1297.

\bibitem{Zahra} Z. Riyahi, A.R. Salemkar, A remark on the Schur multiplier of nilpotent Lie algebras, \textit{J. Algebra} \textbf{438}(2015): 1--6.

\bibitem{RSV} M. C. Rodr¨ªguez-Vallarte, G. Salgado and O.A. S¨¢nchez-Valenzuela, Heisenberg Lie superalgebras and their invariant
superorthogonal and supersymplectic forms. \textit{J. Algebra} \textbf{332}(2011): 71--86.


\bibitem{SAN} F. Saeedi, H. Arabyani and P. Niroomand, On dimension of the Schur multilier of nilpotent Lie algebras II, \textit{Asian-Eur. J. Math.} \textbf{10(4)}(2017): 1750076 (8 pages).
\bibitem{A-B} A. R. Salemkar, B. Edalatzadeh, On covers of perfect Lie algebras, \textit{Algebra Colloq.} \textbf{18(3)}(2011): 419--427.



\bibitem{B-Y} B. Yankosky, On the multiplier of a Lie algebra, \textit{J. Lie Theory} \textbf{13}(2003): 1--6.


\end{thebibliography}
\end{document}